\newtheorem{theorem}{Theorem}[section]
\newtheorem{corollary}[theorem]{Corollary}
\newtheorem{lemma}[theorem]{Lemma}
\newtheorem{proposition}[theorem]{Proposition}
\newtheorem{definition}[theorem]{Definition}
\newtheorem{question}[theorem]{Question}
\newtheorem{remark}[theorem]{Remark}
\numberwithin{equation}{section}
\begin{document}

\title[Equivalence of the sharp effectiveness results of SOP]
 {Equivalence of the sharp effectiveness results of strong openness property}

\author{Shijie Bao}
\address{Shijie Bao: Institute of Mathematics, Academy of Mathematics and Systems Science, Chinese Academy of Sciences, Beijing 100190, China.}
\email{bsjie@amss.ac.cn}

\author{Qi'an Guan}
\address{Qi'an Guan: School of
Mathematical Sciences, Peking University, Beijing 100871, China.}
\email{guanqian@math.pku.edu.cn}

\thanks{}

\subjclass[2020]{32A25, 32A36, 32U05}

\keywords{$\xi-$Bergman kernel, minimal $L^2$ integral, Krull's lemma, strong openness property}

\date{}

\dedicatory{}

\commby{}


\begin{abstract}
    In this paper, we show the equivalence of the sharp effectiveness results of the strong openness property of multiplier ideal sheaves obtained in \cite{BG1} using $\xi-$Bergman kernels and in \cite{Guan19} using minimal $L^2$ integrals.
\end{abstract}

\maketitle

\section{Introduction}\label{Introduction}

There are two distinct sharp effectiveness results of the strong openness property of multiplier ideal sheaves, proven in \cite{Guan19} by Guan and in \cite{BG1} by Bao-Guan, using minimal $L^2$ integrals and $\xi$-Bergman kernels respectively. In the present paper, we will prove that these two effectiveness results are equivalent.

\subsection{Notations and conventions}

Before we discuss the background and motivation of the present paper, we recall some notations and conventions.

\subsubsection{$\xi-$Bergman kernel}
We recall the $\xi-$Bergman kernel (a version of generalized Bergman kernel) defined in \cite{BG1} in the following.

Denote
\begin{equation*}
    \ell_1^{(n)}:=\Big\{(\xi_{\alpha})_{\alpha\in\mathbb{N}^n} \colon \xi_{\alpha}\in\mathbb{C}, \  \sum_{\alpha\in\mathbb{N}^n}|\xi_{\alpha}|\rho^{|\alpha|}<+\infty, \ \forall \rho>0\Big\}.
\end{equation*}
We also denote
\[\ell_0^{(n)}:=\Big\{(\xi_{\alpha})_{\alpha\in\mathbb{N}^n} \colon \xi_{\alpha}\in\mathbb{C}, \  \exists k\in\mathbb{N}, \ \text{s.t.} \ \xi_{\alpha}=0, \ \forall\alpha \ \text{with} \  |\alpha|>k\Big\}\subseteq \ell_1^{(n)},\]
in the present paper. $\ell_1^{(n)}$ can be seen as $\mathcal{O}(\mathbb{C}^n)$, the space of entire functions on $\mathbb{C}^n$, while $\ell^{(n)}_0$ can be seen as $\mathbb{C}[z_1,\ldots,z_n]$, the space of polynomials on $\mathbb{C}^n$.

Let $D$ be a domain in $\mathbb{C}^n$. For any $z_0\in D$, $\xi=(\xi_{\alpha})\in\ell_1^{(n)}$, and $f(z)=\sum_{\alpha\in\mathbb{N}^n}c_{\alpha}(z-z_0)^{\alpha}\in\mathcal{O}_{z_0}$, denote
\begin{equation*}
    (\xi\cdot f)(z_0):=\sum_{\alpha\in\mathbb{N}^n}\xi_{\alpha}\frac{(D^{\alpha}f)(z_0)}{\alpha!}=\sum_{\alpha\in\mathbb{N}^n}\xi_{\alpha}c_{\alpha}.
\end{equation*}
\begin{definition}[see \cite{BG1}]
    Let $\xi\in\ell_1^{(n)}$. The Bergman kernel with respect to $\xi$ on $D$ is defined as:
\begin{equation*}
    K_{\xi,D}(z):=\sup\big\{|(\xi\cdot f)(z)|^2 \colon f\in A^2(D), \ \|f\|_D\le 1\big\},
\end{equation*}
where $A^2(D)=L^2(D)\cap\mathcal{O}(D)$ is the Bergman space, and $\|f\|^2_D=\int_D|f|^2$.
\end{definition}
It can be checked that $K_{\xi,D}(z)<+\infty$ (see \cite[Lemma 2.3]{BG1}). 

Suppose $I$ is an ideal of $\mathcal{O}_o$, the ring of holomorhpic function germs on the origin $o\in\mathbb{C}^n$. Denote
\[\tilde{\ell}_I:=\big\{\xi\in\ell_1^{(n)} : (\xi\cdot f)(o)=0, \ \forall (f,o)\in I\big\},\]
and $\ell_I:=\tilde{\ell}_I\setminus\{0\}$. $\tilde{\ell}_I$ is a linear subspace of $\ell^{(n)}_1$, where it is not trivial when $I$ is a proper ideal of $\mathcal{O}_o$.

Now let $D$ be a domain in $\mathbb{C}^n$ containing the origin $o$, $I$ a proper ideal of $\mathcal{O}_o$, and $(F,o)\in \mathcal{O}_o$. Recall the notation
\[B(F,I,D):=\sup_{\xi\in\ell_I}\frac{|(\xi\cdot F)(o)|^2}{K_{\xi,D}(o)}.\]
\begin{proposition}[Proposition 4.6 in \cite{BG1}]\label{prop-separation}
    $B(F,I,D)=0$ if and only if $(F,o)\in I$.
\end{proposition}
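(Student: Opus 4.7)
The plan is to prove the two directions separately, with Krull's intersection theorem carrying the nontrivial implication. The forward implication is immediate: if $(F,o)\in I$, then for every $\xi\in\ell_I\subseteq\tilde{\ell}_I$ the very definition of $\tilde{\ell}_I$ gives $(\xi\cdot F)(o)=0$, so every term in the supremum defining $B(F,I,D)$ is zero, whence $B(F,I,D)=0$. For the reverse implication I argue by contrapositive: assuming $(F,o)\notin I$, I aim to produce a specific $\xi\in\ell_I$ whose ratio $|(\xi\cdot F)(o)|^2/K_{\xi,D}(o)$ is strictly positive.

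The bridge is Krull's intersection theorem. Since $\mathcal{O}_o$ is a Noetherian local ring with maximal ideal $\mathfrak{m}_o$, one has $I=\bigcap_{k\ge 0}(I+\mathfrak{m}_o^{k+1})$, so the assumption $(F,o)\notin I$ forces $(F,o)\notin I+\mathfrak{m}_o^{k+1}$ for some $k\in\mathbb{N}$. The quotient $V_k:=\mathcal{O}_o/(I+\mathfrak{m}_o^{k+1})$ is a finite-dimensional $\mathbb{C}$-vector space, being a quotient of the finite-dimensional $\mathcal{O}_o/\mathfrak{m}_o^{k+1}$, and the image $[F]\in V_k$ is nonzero. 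A standard dual-basis argument supplies a $\mathbb{C}$-linear functional $\varphi$ on $V_k$ with $\varphi([F])\neq 0$, which pulls back to $\tilde{\varphi}\colon\mathcal{O}_o\to\mathbb{C}$ annihilating $I+\mathfrak{m}_o^{k+1}$ but not $F$. Because $\tilde{\varphi}$ vanishes on $\mathfrak{m}_o^{k+1}$, it depends only on the Taylor coefficients of order $|\alpha|\le k$ at $o$, and is therefore represented by a unique $\xi\in\ell_0^{(n)}\subseteq\ell_1^{(n)}$ (with $\xi_\alpha=0$ for $|\alpha|>k$) through $\tilde{\varphi}(f)=(\xi\cdot f)(o)$. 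The conditions $\tilde{\varphi}|_I=0$ and $\tilde{\varphi}(F)\neq 0$ translate into $\xi\in\tilde{\ell}_I$, $\xi\ne 0$ (so $\xi\in\ell_I$), and $(\xi\cdot F)(o)\ne 0$.

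The finiteness $K_{\xi,D}(o)<+\infty$ from \cite[Lemma 2.3]{BG1} then ensures that the contribution of this $\xi$ to the supremum is strictly positive (or $+\infty$ in the degenerate case $K_{\xi,D}(o)=0$), forcing $B(F,I,D)>0$ and completing the contrapositive. The main obstacle I anticipate is recognizing Krull's theorem as the correct bridge between the analytic pairing $(\xi\cdot F)(o)$ and the algebraic membership $(F,o)\in I$: once one has reduced to the finite-dimensional quotient $V_k$, the remainder is a routine translation between $\mathbb{C}$-linear functionals vanishing on $\mathfrak{m}_o^{k+1}$ and elements of $\ell_0^{(n)}$, plus the appeal to the already-established finiteness of $K_{\xi,D}(o)$.
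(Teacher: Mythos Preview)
Your argument is correct. Both you and the paper invoke Krull's intersection theorem as the bridge from the algebraic condition $(F,o)\notin I$ to a finite-dimensional situation, but the routes diverge thereafter. The paper does not prove Proposition~\ref{prop-separation} directly; instead it first establishes the much stronger equality $B^{\circ}(F,I,D)=B(F,I,D)=C_{F,I}(D)$ (Theorem~\ref{thm-BFID.eq.CFID}) via orthonormal bases of $A^2(D)$, the orthogonal projection onto $A^2(D,I)^{\perp}$, and a Montel--Fatou limiting argument (Lemmas~\ref{lem-mj0.subset.I}--\ref{lem-bounded.approx.unbounded}), and only then reads off the proposition as a corollary (Remark~\ref{rem-generalized.separation}) from the known fact that $C_{F,I}(D)>0$ when $(F,o)\notin I$. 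Your route is purely algebraic and more elementary for this particular statement: once Krull supplies an index $k$ with $(F,o)\notin I+\mathfrak{m}_o^{k+1}$, finite-dimensional linear duality on $\mathcal{O}_o/(I+\mathfrak{m}_o^{k+1})$ immediately produces the required $\xi\in\ell_I\cap\ell_0^{(n)}$, with no appeal to the Bergman-space apparatus. The trade-off is that the paper's longer detour yields the sharp quantitative identity $B=C$, whereas your argument only gives the qualitative $B>0$; for the proposition as stated, however, your approach is cleaner and avoids both the separation theorem used in \cite{BG1} and the analytic machinery behind Theorem~\ref{thm-BFID.eq.CFID}.
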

This proposition was proved in \cite{BG1} by the separation theorem in the functional analysis theory.

In the present paper, we also denote
\[B^{\circ}(F,I,D):=\sup_{\xi\in\ell_I\cap\ell_0^{(n)}}\frac{|(\xi\cdot F)(o)|^2}{K_{\xi,D}(o)}.\]
Clearly, we have $B^{\circ}(F,I,D)\le B(F,I,D)$.

\subsubsection{Minimal $L^2$ integral}
The second author of the present paper introduced the minimal $L^2$ integrals in \cite{Guan19} to establish a sharp effectiveness result of the strong openness property. We recall the definition and notation of the minimal $L^2$ integral below.

Again let $D$ be a domain in $\mathbb{C}^n$ containing the origin $o$, $I$ a proper ideal of $\mathcal{O}_o$, and $(F,o)\notin I$. Recall the notation of \emph{minimal $L^2$ integral} (\cite{Guan19})
\[C_{F,I}(D):=\inf\big\{\|\tilde{F}\|_D^2 : \tilde{F}\in A^2(D), \ (\tilde{F}-F,o)\in I\big\},\]
and if the set in the right hand is empty, then denote $C_{F,I}(D)=+\infty$. By Montel's theorem, if $C_{F,I}(D)<+\infty$, there exists a unique $G\in A^2(D)$ with $\|G\|_{D}^2=C_{F,I}(D)$ and $(G-F,o)\in I$. One can also note that $C_{F,I}(D)=0$ if and only if $(F,o)\in I$ (see \cite{Guan19}).

Denote
\[A^2(D,I):=\big\{f\in A^2(D) : (f,o)\in I\big\},\]
which is a closed subspace of $A^2(D)$. Then there exists a unique closed subspace $A^2(D,I)^{\perp}$ of $A^2(D)$, which is the orthogonal complement of $A^2(D,I)$ in $A^2(D)$. Moreover, for any $F\in A^2(D)\setminus A^2(D,I)$, the unique $G\in A^2(D)$ with $\|G\|_{D}^2=C_{F,I}(D)$ and $(G-F,o)\in I$ is actually the image of $F$ under the orthogonal projection from $A^2(D)$ to $A^2(D,I)^{\perp}$.

With simple computations, one can check that $B(F,I,D)\le C_{F,I}(D)$ holds. More precisely, assume that $\tilde{F}\in A^2(D)$ satisfying $(\tilde{F}-F,o)\in I$ and $\|\tilde{F}\|_D^2=C_{F,I}(D)<+\infty$. Then $(\xi\cdot F)(o)=(\xi\cdot \tilde{F})(o)$ for any $\xi\in\ell_I$, and thus
\[B(F,I,D)=\sup_{\xi\in\ell_I}\frac{|(\xi\cdot F)(o)|^2}{K_{\xi,D}(o)}\le\sup_{\substack{\xi\in\ell_I, \\ (\xi\cdot F)(o)\neq 0}}\frac{|(\xi\cdot F)(o)|^2}{|(\xi\cdot \tilde{F})(o)|^2/\|\tilde{F}\|_D^2}=\|\tilde{F}\|_D^2=C_{F,I}(D).\]

\subsection{Background and motivation}
Let $D$ be a pseudoconvex domain in $\mathbb{C}^n$ containing the origin $o$, $\varphi$ a negative plurisubharmonic function on $D$ with $\varphi(o)=-\infty$, and $F$ a holomorphic function on $D$.

Recall the \emph{jumping number}
\[c_o^F(\varphi):=\sup\big\{c\ge 0 : (F,o)\in\mathcal{I}(c\varphi)_o\big\},\]
and the \emph{multiplier ideal}
\[\mathcal{I}(c\varphi)_o:=\big\{(G,o)\in\mathcal{O}_o : |G|^2e^{-c\varphi} \ \text{is\ } L^1 \ \text{integrable near\ } o\big\}, \ \forall c>0.\]

The \emph{strong openness property} of multiplier ideal sheaves (\cite{GZ15a}) shows that: \emph{if $\int_D|F|^2e^{-\varphi}<+\infty$, then there must exist some $p>1$ such that $(F,o)\in\mathcal{I}(p\varphi)_o$, or equivalently, $(F,o)\notin\mathcal{I}(c_o^F(\varphi)\varphi)_o$}. The effectiveness of the strong openness property was established in \cite{GZ15b}. Moreover, \cite{Guan19} gave the following sharp effectiveness result.
\begin{theorem}[\cite{Guan19}]\label{thm-effect.Guan}
    Suppose $\int_D|F|^2e^{-\varphi}<+\infty$. Then for any $p>1$ satisfying
    \[\frac{p}{p-1}>\frac{\int_D|F|^2e^{-\varphi}}{C_{F,\mathcal{I}_+(c_o^F(\varphi)\varphi)_o}(D)},\]
    we have $(F,o)\in\mathcal{I}(p\varphi)_o$.
\end{theorem}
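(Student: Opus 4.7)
The plan is to derive the theorem from a concavity property of the minimal $L^2$ integrals $C_{F,I}(\cdot)$ along a one-parameter family of weights (or sublevel sets) associated to $\varphi$, which is the machinery underlying Guan's proof in \cite{Guan19}.

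Set $c_0 := c_o^F(\varphi)$, $I := \mathcal{I}_+(c_0\varphi)_o$, $A := \int_D |F|^2 e^{-\varphi}$, and $B := C_{F,I}(D)$. By the definition of the jumping number, $(F, o) \in \mathcal{I}(p\varphi)_o$ is equivalent to $p < c_0$ (the case $p = c_0$ is ruled out by the strong openness property itself). Since $\varphi \le 0$ on $D$ and $F$ is admissible for $B$, we have $0 < B \le \|F\|_D^2 \le A < +\infty$; moreover by strong openness $c_0 > 1$. A short rearrangement turns the hypothesis $p/(p-1) > A/B$ into $p < A/(A-B)$ in the informative case $A > B$. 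Hence the theorem reduces to proving the sharp estimate
\[
 c_0 \ge \frac{A}{A-B}, \qquad \text{equivalently,}\qquad A \ge \frac{c_0}{c_0-1}\, B.
\]

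For this estimate I would introduce a one-parameter family that interpolates between the two quantities $A$ and $B$. A natural candidate is to cut off the weight: for $s \ge 0$ set $\varphi_s := \max(\varphi, -s)$ and define
\[
 g(s) := \inf\Bigl\{ \int_D |\tilde F|^2 e^{-\varphi_s} \; : \; \tilde F \in \mathcal{O}(D),\ (\tilde F - F, o) \in I,\ \int_D |\tilde F|^2 e^{-\varphi_s} < +\infty \Bigr\}.
\]
Then $g(0) = B$ (since $\varphi_0 \equiv 0$) and $\sup_{s \ge 0} g(s) \le A$ (testing with $\tilde F = F$). The crucial input, proved by applying the sharp Ohsawa-Takegoshi / Guan-Zhou optimal $L^2$ extension theorem to transport minimizers between different values of $s$ with a controlled increase in norm, is that $g$ satisfies a concavity statement of Guan-Zhou type on $[0,\infty)$. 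Pairing this concavity with the two boundary evaluations $g(0) = B$ and $\lim_{s\to\infty} g(s) \le A$ should then extract exactly the linear bound $A \ge \tfrac{c_0}{c_0-1}\, B$.

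The main obstacle is the concavity step. It genuinely requires the sharp form of the optimal $L^2$ extension theorem (Blocki; Guan-Zhou), because the constant $c_0/(c_0-1)$ in the conclusion is precisely the extension constant propagated through the argument; any weaker form of Ohsawa-Takegoshi would produce a correspondingly weaker final bound. A secondary difficulty is calibrating the parametrization so that the endpoint evaluation matches $A$ exactly rather than only up to a harmless multiplicative constant, since sharpness must be preserved at every step of the reduction.
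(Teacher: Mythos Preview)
This theorem is not proved in the present paper; it is quoted from \cite{Guan19} as background, and the paper's only remark on its proof is the sentence ``Theorem~\ref{thm-effect.Guan} was established by estimating the minimal $L^2$ integrals on the sublevel sets of plurisubharmonic functions in \cite{Guan19}.'' So there is no argument here against which to compare your proposal directly.

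That said, your reduction to the sharp inequality $\int_D|F|^2e^{-\varphi}\ge \tfrac{c_0}{c_0-1}\,C_{F,I}(D)$ is correct and is exactly what \cite{Guan19} proves. But the interpolation family you propose is not the one used there: Guan works with the minimal $L^2$ integrals $G(t):=C_{F,I}(\{\psi<-t\})$ on \emph{sublevel sets} (for a suitable rescaling $\psi$ of $\varphi$) with trivial weight, not with truncated weights $\varphi_s=\max(\varphi,-s)$ on all of $D$. The main lemma in \cite{Guan19} is a concavity statement for $G$ after the change of variable $r=e^{-t}$ (proved via the optimal $L^2$ extension theorem of Guan--Zhou), and the link to $A=\int_D|F|^2e^{-\varphi}$ comes from a layer-cake decomposition over the sublevel sets. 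Your truncated-weight family would need its own concavity statement carrying the same sharp constant, which you have not supplied and which does not obviously follow from optimal $L^2$ extension in the same way. In short: right target inequality and right general philosophy, but the specific one-parameter family you wrote down is not the one that makes the argument go through in \cite{Guan19}, and you have (as you acknowledge) left the crucial concavity step as a black box.
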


Afterwards, \cite{BG1} proved the following result, which gave the optimal $L^2$ extension approach to the effectiveness result of the strong openness property.

\begin{theorem}[\cite{BG1}]\label{thm-effect.BG}
    Suppose $\int_D|F|^2e^{-\varphi}<+\infty$. Then for any $p>1$ satisfying
    \[\frac{p}{p-1}>\frac{\int_D|F|^2e^{-\varphi}}{B(F,\mathcal{I}_+(c_o^F(\varphi)\varphi)_o, D)},\]
    we have $(F,o)\in\mathcal{I}(p\varphi)_o$.
\end{theorem}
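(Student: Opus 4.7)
The plan is to derive Theorem~\ref{thm-effect.BG} from Theorem~\ref{thm-effect.Guan} by invoking only the inequality $B(F,I,D)\le C_{F,I}(D)$ recorded at the end of the previous subsection. The idea is simply that this inequality makes the ratio on the right-hand side of Theorem~\ref{thm-effect.BG} at least as large as the corresponding ratio in Theorem~\ref{thm-effect.Guan}, so the hypothesis of the former is formally stronger than that of the latter while the conclusion is identical.

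In detail, set $I:=\mathcal{I}_+(c_o^F(\varphi)\varphi)_o$ and fix any $p>1$ satisfying
\[\frac{p}{p-1}>\frac{\int_D|F|^2e^{-\varphi}}{B(F,I,D)}.\]
Finiteness of the right-hand side forces $B(F,I,D)>0$, and by Proposition~\ref{prop-separation} this implies $(F,o)\notin I$; consequently $C_{F,I}(D)>0$ as well. Combining these observations with the established inequality $B(F,I,D)\le C_{F,I}(D)$ and taking reciprocals,
\[\frac{p}{p-1}>\frac{\int_D|F|^2e^{-\varphi}}{B(F,I,D)}\ge\frac{\int_D|F|^2e^{-\varphi}}{C_{F,I}(D)},\]
so the hypothesis of Theorem~\ref{thm-effect.Guan} is met. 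That theorem then delivers $(F,o)\in\mathcal{I}(p\varphi)_o$, which is the desired conclusion.

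Since the deduction above is essentially a single comparison of ratios, there is no substantive obstacle to proving Theorem~\ref{thm-effect.BG} in this way; it is a purely formal consequence of Theorem~\ref{thm-effect.Guan} together with the elementary inequality $B\le C$. The genuinely non-trivial content of the paper, which this plan does not need to address, lies in the \emph{opposite} comparison $C_{F,I}(D)\le B(F,I,D)$ (and in fact the sharper identity $B^{\circ}=C$ suggested by the polynomial-truncated functional $B^{\circ}$ introduced above); establishing that direction is what would upgrade the one-way implication just sketched into the full equivalence of Theorems~\ref{thm-effect.Guan} and~\ref{thm-effect.BG} advertised in the abstract.
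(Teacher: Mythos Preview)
Your deduction is correct: the elementary inequality $B(F,I,D)\le C_{F,I}(D)$ recorded just before \S1.2 immediately reduces the hypothesis of Theorem~\ref{thm-effect.BG} to that of Theorem~\ref{thm-effect.Guan}, so the former follows from the latter exactly as you write. Note, however, that the present paper does not itself prove Theorem~\ref{thm-effect.BG}; it is quoted from \cite{BG1}, where it was established by an independent method (log-plurisubharmonicity of fiberwise $\xi$-Bergman kernels), not by reducing to Theorem~\ref{thm-effect.Guan}. What this paper contributes is Corollary~1.6, the \emph{equivalence} of the two theorems, and your argument is precisely the trivial half of that equivalence. You have correctly identified that the substantive half --- deducing Theorem~\ref{thm-effect.Guan} from Theorem~\ref{thm-effect.BG} --- requires the reverse inequality $C_{F,I}(D)\le B(F,I,D)$, which is the content of Theorem~\ref{thm-BFID.eq.CFID} and is obtained via Krull's lemma and the approximation through $I+\mathfrak{m}^k$. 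So your route differs from both the original proof in \cite{BG1} and from the paper's own argument for the equivalence, but it is valid for the one-way implication you set out to prove.
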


In \cite{BG1}, Theorem \ref{thm-effect.BG} was proved for bounded pseudoconvex domains  $D$, but it is easy to generalize the result to all pseudoconvex domains.

These two effectiveness results of the strong openness property were both shown to be sharp in \cite{Guan19} and \cite{BG1}, respectively. Theorem \ref{thm-effect.Guan} was established by estimating the minimal $L^2$ integrals on the sublevel sets of plurisubharmonic functions in \cite{Guan19}, while Theorem \ref{thm-effect.BG} was proved using the log-plurisubharmonicity of fiberwise $\xi$-Bergman kernels in \cite{BG1}. There are differences between minimal $L^2$ integrals and $\xi$-Bergman kernels. For example, in \cite{BGY23}, Bao-Guan-Yuan showed that the log-convexity of the minimal  $L^2$  integrals on the sublevel sets of plurisubharmonic functions does not hold generally, but this property is true for $\xi$-Bergman kernels. This leads naturally to the following question:

\begin{question}\label{ques-equiv}
Can one show that effectiveness results of the strong openness property in Theorem \ref{thm-effect.BG} and Theorem \ref{thm-effect.Guan} are equivalent, completing the optimal $L^2$ extension approach to Theorem \ref{thm-effect.Guan}? Moreover, for any ideal $I$ of $\mathcal{O}_o$, do we have
\[B(F,I,D)=C_{F,I}(D)?\]
\end{question}

We provide an affirmative answer to Question \ref{ques-equiv} in the present paper, which completes the optimal $L^2$ extension approach to Theorem \ref{thm-effect.Guan}.

Additionally, the proof of Proposition \ref{prop-separation} (a key step in Theorem \ref{thm-effect.BG}) in \cite{BG1} relies on the separation theorem from functional analysis, which only guarantees the existence of a desired functional $\xi$ without providing a method to find it. Therefore, it is valuable to find a proof that does not rely on the separation theorem. In this paper, we will present a method for finding the functional in the proof of the main results.

\subsection{Main results and applications}

Now we show the main results of the present paper, with several applications.

\subsubsection{Main results}

The following theorem is the main theorem of the present paper, which provides the affirmative answer to Question \ref{ques-equiv}.

\begin{theorem}\label{thm-BFID.eq.CFID}
    Let $D$ be a domain in $\mathbb{C}^n$ containing the origin $o$, $I$ a proper ideal of $\mathcal{O}_o$, and $(F,o)\in\mathcal{O}_o$. Then
    \[B^{\circ}(F,I,D)=B(F,I,D)=C_{F,I}(D).\]
\end{theorem}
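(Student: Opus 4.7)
Since the inequalities $B^{\circ}(F,I,D)\le B(F,I,D)\le C_{F,I}(D)$ are already established in the excerpt, the entire content of the theorem is $C_{F,I}(D)\le B^{\circ}(F,I,D)$. If $(F,o)\in I$ both sides equal $0$, so I assume $(F,o)\notin I$ and, for brevity of exposition, that $C_{F,I}(D)<+\infty$ (the case $C_{F,I}(D)=+\infty$ follows from a simpler variant of the same linear-algebraic duality applied at the first $k$ for which the constraint set below is empty). The plan is to replace $I$ by its finite-codimensional approximations $I^{(k)}:=I+\mathfrak{m}_o^{k+1}$, where $\mathfrak{m}_o$ denotes the maximal ideal of $\mathcal{O}_o$. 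The point is that $(f,o)\in I^{(k)}$ depends only on the Taylor expansion of $f$ at $o$ up to order $k$, so the dual side of the corresponding $A^2$-minimization is automatically produced by the finitely-supported functionals in $\ell_0^{(n)}$.

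\emph{Step 1 (Krull-type limit).} Set
\[ C_k:=\inf\bigl\{\|\tilde F\|_D^2:\tilde F\in A^2(D),\;(\tilde F-F,o)\in I^{(k)}\bigr\}; \]
then $C_k\le C_{k+1}\le C_{F,I}(D)$, and I claim $C_k\to C_{F,I}(D)$. The unique minimizer $G_k\in A^2(D)$ realizing $C_k$ exists, since each Taylor coefficient at $o$ of order $\le k$ is a bounded linear functional on $A^2(D)$, making the feasible set a nonempty closed affine subspace. The sequence $\{G_k\}$ is norm-bounded by $\sqrt{C_{F,I}(D)}$, so a subsequence converges weakly in $A^2(D)$ to some $G_\infty$; weak convergence in $A^2(D)$ implies locally uniform convergence on $D$, hence convergence of every Taylor coefficient at $o$. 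For each $\ell$, $I^{(\ell)}/\mathfrak{m}_o^{\ell+1}$ is a (closed) finite-dimensional subspace of $\mathcal{O}_o/\mathfrak{m}_o^{\ell+1}$ and $(G_k-F,o)\in I^{(\ell)}$ for all $k\ge\ell$, so passage to the limit gives $(G_\infty-F,o)\in I^{(\ell)}$ for every $\ell$. Krull's intersection theorem $\bigcap_\ell I^{(\ell)}=I$ now yields $(G_\infty-F,o)\in I$, and weak lower semicontinuity of $\|\cdot\|_D^2$ gives $C_{F,I}(D)\le\|G_\infty\|_D^2\le\lim_{k\to\infty} C_k$, as claimed.

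\emph{Step 2 (finite-dimensional duality).} For $\xi\in\ell_1^{(n)}$, let $k_\xi\in A^2(D)$ denote the Riesz representative of the bounded linear functional $f\mapsto(\xi\cdot f)(o)$ on $A^2(D)$; then $K_{\xi,D}(o)=\|k_\xi\|_D^2$. For $\xi\in\ell_0^{(n)}$ with $\xi_\alpha=0$ for $|\alpha|>k$, the condition $\xi\in\tilde\ell_I$ is equivalent to $\xi$ annihilating $I^{(k)}/\mathfrak{m}_o^{k+1}$, since $\xi$ annihilates $\mathfrak{m}_o^{k+1}$ automatically. Consequently, writing $M_k:=A^2(D,I^{(k)})$,
\[ M_k^\perp=\bigl\{k_\xi:\xi\in\tilde\ell_I\cap\ell_0^{(n)},\;\xi_\alpha=0\text{ for }|\alpha|>k\bigr\}, \]
and the right-hand side is already a $\mathbb{C}$-linear subspace, because $\xi\mapsto k_\xi$ is conjugate-linear and the indexing set is a $\mathbb{C}$-subspace of $\ell_0^{(n)}$. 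Since $G_k\in M_k^\perp$, there is $\eta_k$ in this indexing set with $k_{\eta_k}=G_k$; for large $k$, $C_k>0$ forces $\eta_k\ne 0$, so $\eta_k\in\ell_I\cap\ell_0^{(n)}$. Using $\eta_k\in\tilde\ell_I$ and $(G_k-F,o)\in I^{(k)}$ to rewrite the numerator,
\[ \frac{|(\eta_k\cdot F)(o)|^2}{K_{\eta_k,D}(o)}=\frac{|\langle G_k,k_{\eta_k}\rangle_D|^2}{\|k_{\eta_k}\|_D^2}=\|G_k\|_D^2=C_k, \]
so $B^{\circ}(F,I,D)\ge C_k$ for every large $k$, and Step 1 closes the argument.

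The delicate step is the Krull-type limit in Step 1, where the germ-level identity $\bigcap_k I^{(k)}=I$ must be transported across a weak limit in the Bergman space; the enabling fact is that weak convergence in $A^2(D)$ forces convergence of all Taylor coefficients at $o$. Once Step 1 is in place, Step 2 is linear algebra on the finite-dimensional quotient $\mathcal{O}_o/\mathfrak{m}_o^{k+1}$ and explicitly produces the functional $\eta_k$, bypassing the use of the separation theorem in the original proof of Proposition \ref{prop-separation}.
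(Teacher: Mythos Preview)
Your proof is correct and follows essentially the same strategy as the paper: reduce to the finite-codimensional ideals $I+\mathfrak{m}_o^{k+1}$, where the minimizer $G_k$ is realized as the Riesz representative $k_{\eta_k}$ of some $\eta_k\in\ell_I\cap\ell_0^{(n)}$, and then pass to the limit via Krull's intersection theorem. Your use of weak compactness in $A^2(D)$ (in place of Montel's theorem) and abstract Riesz representation (in place of the paper's explicit orthonormal basis from Lemma~\ref{lem-basis}) streamlines the presentation and lets you treat bounded and unbounded $D$ uniformly, but the underlying ideas coincide with those of Lemmas~\ref{lem-mj0.subset.I}--\ref{lem-bounded.approx.unbounded}.
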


This theorem indicates that Theorem \ref{thm-effect.BG} (the effectiveness result of the strong openness property obtained in \cite{BG1}) is equivalent to Theorem \ref{thm-effect.Guan} (obtained in \cite{Guan19}).

\begin{corollary}
    Theorem \ref{thm-effect.BG} is equivalent to Theorem \ref{thm-effect.Guan}.
\end{corollary}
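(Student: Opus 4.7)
Since the excerpt already establishes $B^{\circ}(F,I,D)\le B(F,I,D)\le C_{F,I}(D)$, only the reverse chain $C_{F,I}(D)\le B^{\circ}(F,I,D)$ remains; the case $(F,o)\in I$ is trivial. My plan is to approximate $I$ by the finite-codimensional ideals $I_k:=I+\mathfrak{m}^{k+1}$ (with $\mathfrak{m}$ the maximal ideal of $\mathcal{O}_o$), prove the identity $B^{\circ}(F,I_k,D)=C_{F,I_k}(D)$ for each $k$ by an explicit finite-dimensional construction of a witness $\xi$ -- this is the promised replacement for the separation theorem of \cite{BG1} -- and then pass to the limit via Krull's intersection theorem $\bigcap_k I_k=I$.

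For the finite-codimensional step, fix $J=I_k$ and assume $(F,o)\notin J$ (otherwise both sides vanish). Because $\mathfrak{m}^{k+1}\subseteq J$ forces $\xi_\alpha=0$ for $|\alpha|>k$, every $\xi\in\tilde{\ell}_J$ automatically lies in $\ell_0^{(n)}$, so $B^{\circ}(F,J,D)=B(F,J,D)$ here. Let $G$ be the minimizer realizing $C_{F,J}(D)$, so $G\in A^{2}(D,J)^{\perp}$ with $\|G\|_D^{2}=C_{F,J}(D)$. The functional $f\mapsto\langle f,G\rangle_D$ on $A^{2}(D)$ vanishes on $A^{2}(D,J)$ and descends to $A^{2}(D)/A^{2}(D,J)$; the germ-at-$o$ map embeds this quotient into the finite-dimensional space $\mathcal{O}_o/J$. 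By elementary linear algebra I extend the descended functional to a linear $\Phi$ on $\mathcal{O}_o/J$, and $\Phi$ is represented by a unique $\xi\in\tilde{\ell}_J\cap\ell_0^{(n)}$ satisfying $(\xi\cdot f)(o)=\langle f,G\rangle_D$ for every $f\in A^{2}(D)$. Using $F\equiv G\pmod{J}$ gives $(\xi\cdot F)(o)=\langle G,G\rangle_D=C_{F,J}(D)$, while Cauchy--Schwarz (with equality at $f=G/\|G\|_D$) gives $K_{\xi,D}(o)=\|G\|_D^{2}$, so the ratio equals $C_{F,J}(D)$, yielding $B^{\circ}(F,J,D)\ge C_{F,J}(D)$.

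The remaining task is to show $C_{F,I_k}(D)\uparrow C_{F,I}(D)$, after which $B^{\circ}(F,I,D)\ge B^{\circ}(F,I_k,D)=C_{F,I_k}(D)\to C_{F,I}(D)$ (the first inequality because $I\subseteq I_k$ gives $\tilde{\ell}_{I_k}\subseteq\tilde{\ell}_I$). Monotonicity $C_{F,I_k}(D)\le C_{F,I}(D)$ is immediate. For the reverse direction -- which also disposes of the case $C_{F,I}(D)=+\infty$ -- suppose $\|G_k\|_D^{2}=C_{F,I_k}(D)\le M$ along a subsequence. Montel's theorem extracts a further subsequence converging locally uniformly to some $G\in A^{2}(D)$ with $\|G\|_D^{2}\le M$. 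For each fixed $l$, every $k\ge l$ gives $(G_k-F,o)\in I_k\subseteq I_l$; since $I_l/\mathfrak{m}^{l+1}$ is a closed (finite-dimensional) subspace of $\mathcal{O}_o/\mathfrak{m}^{l+1}$, convergence of Taylor coefficients up to order $l$ forces $(G-F,o)\in I_l$. Krull's theorem then yields $(G-F,o)\in\bigcap_l I_l=I$, so $C_{F,I}(D)\le\|G\|_D^{2}\le M$, closing the loop.

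The main obstacle is precisely this last step: transferring the germ-theoretic condition $(G_k-F,o)\in I_k$ through the Montel limit to obtain $(G-F,o)\in I$. It rests on Krull's intersection theorem combined with the observation that each approximating ideal cuts out a closed condition on finite-order jets. Once that passage is secure, the finite-dimensional extension in the middle paragraph supplies the witness $\xi$ constructively, fulfilling the second aim announced in the introduction.
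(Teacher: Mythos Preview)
Your proposal is essentially the paper's own proof of Theorem~\ref{thm-BFID.eq.CFID} (from which the corollary is immediate): approximate $I$ by the finite-codimensional ideals $I+\mathfrak{m}^k$, realize $C_{F,I+\mathfrak{m}^k}(D)$ by an explicit $\xi\in\ell_{I+\mathfrak{m}^k}\cap\ell_0^{(n)}$, and pass to the limit via Montel and Krull's intersection theorem (the paper's Lemmas~\ref{lem-mj0.subset.I} and~\ref{lem-I+mk}). The only difference is cosmetic: the paper builds $\eta$ with $T(\eta)=G$ from the orthonormal basis $\{\sigma_\alpha\}$ of Lemma~\ref{lem-basis} and then verifies $\eta\in\ell_I$ by a Taylor-truncation argument, whereas you obtain the same $\xi$ by extending the functional $f\mapsto\langle f,G\rangle_D$ through the finite-dimensional quotient $\mathcal{O}_o/J$; when $D$ is bounded the image of $A^2(D)$ in $\mathcal{O}_o/J$ is everything, so the extension is unique and the two constructions coincide.

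There is one small gap you should close: your finite-codimensional step assumes the minimizer $G$ exists, i.e.\ $C_{F,I_k}(D)<+\infty$, which can fail for unbounded $D$ (e.g.\ $A^2(\mathbb{C})=\{0\}$). The paper handles this by first proving everything for bounded $D$ and then exhausting an arbitrary $D$ by bounded subdomains (Lemma~\ref{lem-bounded.approx.unbounded}). Alternatively, within your own framework, if $C_{F,I_k}(D)=+\infty$ then the image of $A^2(D)$ in the finite-dimensional space $\mathcal{O}_o/I_k$ is a proper subspace not containing $[F]$, so a linear functional vanishing on that image but not at $[F]$ yields $\xi\in\ell_{I_k}\cap\ell_0^{(n)}$ with $K_{\xi,D}(o)=0$ and $(\xi\cdot F)(o)\neq 0$, whence $B^{\circ}(F,I,D)=+\infty$ directly.
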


\subsubsection{Applications of the main results}
For any $\xi\in\ell_1^{(n)}$, the functional
\begin{flalign*}
    \begin{split}
        A^2(D) &\longrightarrow \mathbb{C}\\
        f &\longmapsto (\xi\cdot f)(o),
    \end{split}
\end{flalign*}
is linear and continuous. By Riesz's representation theorem, there exists some $T(\xi)\in A^2(D)$ such that $\langle f,T(\xi)\rangle=(\xi\cdot f)(o)$ for any $f\in A^2(D)$, where $\langle \cdot, \cdot \rangle$ is the inner product on the Hilbert space $A^2(D)$. It is known that $T(\ell_0^{(n)})$ is dense in $A^2(D)$ (see \cite[Appendix]{BG3}).

Theorem \ref{thm-BFID.eq.CFID} implies

\begin{corollary}\label{cor-dense}
    For any ideal $I$ of $\mathcal{O}_o$, $T\big(\tilde{\ell}_I\cap\ell_0^{(n)}\big)$ is dense in $A^2(D,I)^{\perp}$. 
\end{corollary}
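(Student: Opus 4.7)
The plan is a standard Hilbert-space orthogonality argument in which Theorem \ref{thm-BFID.eq.CFID} carries all the substantive content.

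First I would verify the inclusion $T(\tilde{\ell}_I\cap\ell_0^{(n)})\subseteq A^2(D,I)^{\perp}$. For any $\xi\in\tilde{\ell}_I$ and any $f\in A^2(D,I)$, the defining relation of $T$ and the definition of $\tilde{\ell}_I$ give
\[\langle f,T(\xi)\rangle=(\xi\cdot f)(o)=0,\]
so $T(\xi)\in A^2(D,I)^{\perp}$. Since $T$ is linear and $\tilde{\ell}_I\cap\ell_0^{(n)}$ is a linear subspace of $\ell_1^{(n)}$, the image $T(\tilde{\ell}_I\cap\ell_0^{(n)})$ is a linear subspace of the Hilbert space $A^2(D,I)^{\perp}$. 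Density in $A^2(D,I)^{\perp}$ then reduces to showing that any $g\in A^2(D,I)^{\perp}$ with $\langle g,T(\xi)\rangle=0$ for every $\xi\in\tilde{\ell}_I\cap\ell_0^{(n)}$ must vanish.

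For such a $g$, the orthogonality condition is exactly $(\xi\cdot g)(o)=0$ for every $\xi\in\tilde{\ell}_I\cap\ell_0^{(n)}$, which forces $B^{\circ}(g,I,D)=0$. On the other hand, because $g\in A^2(D,I)^{\perp}$, every competitor in the definition of $C_{g,I}(D)$ has the form $g+h$ with $h\in A^2(D,I)$, and the Pythagorean identity yields $\|g+h\|_D^2=\|g\|_D^2+\|h\|_D^2$, which is minimized at $h=0$. Hence $C_{g,I}(D)=\|g\|_D^2$, and applying Theorem \ref{thm-BFID.eq.CFID} gives
\[\|g\|_D^2=C_{g,I}(D)=B^{\circ}(g,I,D)=0,\]
so $g=0$, completing the density argument.

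I do not anticipate a real obstacle once Theorem \ref{thm-BFID.eq.CFID} is in hand; the only minor points are keeping track of the antilinear slot of the inner product when translating $\langle g,T(\xi)\rangle=0$ into $(\xi\cdot g)(o)=0$, and recording the easy identification $C_{g,I}(D)=\|g\|_D^2$ for $g\in A^2(D,I)^{\perp}$.
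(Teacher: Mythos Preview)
Your proof is correct and takes a different, somewhat cleaner route than the paper. The paper proceeds constructively: for a fixed $F\in A^2(D,I)^{\perp}\setminus\{0\}$, it uses Theorem \ref{thm-BFID.eq.CFID} to produce $\xi_j\in\ell_I\cap\ell_0^{(n)}$ with $|(\xi_j\cdot F)(o)|^2/K_{\xi_j,D}(o)\to\|F\|_D^2$, then rescales and adjusts the phase to get $\eta_j$ so that $G_j:=T(\eta_j)$ satisfies $\|G_j\|_D=\|F\|_D$ and $\langle F,G_j\rangle_D\to\|F\|_D^2$, whence $\|F-G_j\|_D\to 0$ by expanding the norm. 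You instead invoke the standard Hilbert-space density criterion: the only $g\in A^2(D,I)^{\perp}$ orthogonal to the whole image must vanish, which you read off directly from $B^{\circ}(g,I,D)=C_{g,I}(D)=\|g\|_D^2$. Both arguments rest entirely on Theorem \ref{thm-BFID.eq.CFID}; yours avoids the normalization and phase bookkeeping, while the paper's version has the mild advantage of exhibiting the approximating sequence explicitly. One small correction: from $\langle f,T(\alpha\xi)\rangle=\alpha\langle f,T(\xi)\rangle$ one sees that $T$ is conjugate-linear rather than linear, but this does not affect your argument, since the image of a complex linear subspace under a conjugate-linear map is still a complex linear subspace.
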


Let $D$ be a bounded pseudoconvex domain in $\mathbb{C}^n$ containing the origin $o$, $\varphi$ a negative plurisubharmonic function on $D$ with $\varphi(o)=-\infty$, and $(F,o)\in\mathcal{O}_o$.

For any $\xi\in\ell_1^{(n)}$, the following limit
\[\gamma_{\xi}(\varphi):=\lim_{t\to +\infty}\frac{\log K_{\xi,\{\varphi<-t\}\cap D}(o)}{t}\in [0,+\infty]\cup\{-\infty\},\]
is called the \emph{$\xi-$cse} (a generalization of complex singularity exponent, see \cite{BGY22}) of $\varphi$, where it is proved in \cite{BGY22} that the limit always exists (since $\log K_{\xi,\{\varphi<-t\}\cap D}(o)$ is convex in $t$), and independent of the choice of the bounded pseudoconvex domain $D$ by the fact: if $\xi\neq 0$,
\begin{equation}\label{eq-gamma.xi.varphi.equals.inf}
    \gamma_{\xi}(\varphi)=\inf\big\{c\ge 0 : \xi\in\ell_{\mathcal{I}(c\varphi)_o}\big\}.
\end{equation}

Theorem \ref{thm-BFID.eq.CFID} also implies
\begin{corollary}\label{cor-jumping.number.min.xi.cse}
    If $c_o^F(\varphi)<+\infty$, then
    \[c_o^F(\varphi)=\min_{\xi\in\ell_0^{(n)}, \ (\xi\cdot F)(o)\neq 0}\gamma_{\xi}(\varphi).\]
\end{corollary}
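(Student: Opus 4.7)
The plan is to prove the two inequalities separately, with the nontrivial input being Theorem \ref{thm-BFID.eq.CFID}.

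First I would establish the easy inequality $c_o^F(\varphi) \le \gamma_\xi(\varphi)$ for every $\xi \in \ell_0^{(n)}$ with $(\xi\cdot F)(o)\neq 0$. Arguing by contradiction, suppose $\gamma_\xi(\varphi) < c_o^F(\varphi)$ and choose $c$ strictly between them. By the definition of $c_o^F(\varphi)$ as a supremum and the monotonicity of multiplier ideals in $c$, one has $(F,o)\in \mathcal{I}(c\varphi)_o$. On the other hand, by formula \eqref{eq-gamma.xi.varphi.equals.inf} applied to $\xi\neq 0$, one has $\xi\in\ell_{\mathcal{I}(c\varphi)_o}$, i.e.\ $\xi$ annihilates $\mathcal{I}(c\varphi)_o$. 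This forces $(\xi\cdot F)(o)=0$, contradicting the assumption on $\xi$.

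For the reverse direction I would exhibit a specific $\xi$ realizing the value $c_o^F(\varphi)$. Set $I:=\mathcal{I}(c_o^F(\varphi)\varphi)_o$. The strong openness property (applied to $c_o^F(\varphi)\varphi$) guarantees that $(F,o)\notin I$, which in turn forces $1\notin I$ and hence $I$ is a proper ideal of $\mathcal{O}_o$. Theorem \ref{thm-BFID.eq.CFID} then gives $B^{\circ}(F,I,D)=C_{F,I}(D)$, and the characterization $C_{F,I}(D)=0\iff (F,o)\in I$ yields $C_{F,I}(D)>0$. Consequently $B^{\circ}(F,I,D)>0$, so by definition of the supremum there exists $\xi\in\ell_I\cap\ell_0^{(n)}$ with $|(\xi\cdot F)(o)|^2/K_{\xi,D}(o)>0$; since $K_{\xi,D}(o)<+\infty$ always, this means $(\xi\cdot F)(o)\neq 0$. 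Such a $\xi$ is nonzero and lies in $\ell_{\mathcal{I}(c_o^F(\varphi)\varphi)_o}$, so formula \eqref{eq-gamma.xi.varphi.equals.inf} yields $\gamma_\xi(\varphi)\le c_o^F(\varphi)$.

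Combining the two directions, this $\xi$ satisfies $\gamma_\xi(\varphi)=c_o^F(\varphi)$, so the infimum is attained and equals $c_o^F(\varphi)$. The only nonroutine step is the existence of a \emph{polynomial-type} functional $\xi\in\ell_0^{(n)}$ annihilating $I$ with $(\xi\cdot F)(o)\neq 0$, which is precisely what the $B^\circ=C_{F,I}$ part of Theorem \ref{thm-BFID.eq.CFID} provides; without the restriction to $\ell_0^{(n)}$ this would follow merely from Proposition \ref{prop-separation}, but obtaining $\xi$ as a polynomial is the content that makes the corollary work as stated.
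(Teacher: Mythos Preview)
Your proof is correct and follows essentially the same route as the paper's: both directions are handled the same way, and the key nontrivial step---producing a $\xi\in\ell_0^{(n)}\cap\ell_I$ with $(\xi\cdot F)(o)\neq 0$---is obtained from Theorem~\ref{thm-BFID.eq.CFID} (the paper packages this as Remark~\ref{rem-generalized.separation}). The only cosmetic difference is that you take $I=\mathcal{I}(c_o^F(\varphi)\varphi)_o$ and invoke the strong openness property to get $(F,o)\notin I$, whereas the paper takes $I=\mathcal{I}_+(c_o^F(\varphi)\varphi)_o$, for which $(F,o)\notin I$ is immediate from the definition of $c_o^F(\varphi)$; by the strong openness property these two ideals coincide, so the arguments are equivalent.
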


Corollary \ref{cor-jumping.number.min.xi.cse} generalizes \cite[Corollary 3.2]{BGY22}.

\section{Preliminaries}

In this section, we recall some preliminaries which will be used in the proofs.

\subsection{Krull's lemma}

The famous Krull's lemma for Noetherian local rings is the key ingredient of the proof of Theorem \ref{thm-BFID.eq.CFID}.

\begin{lemma}[Krull's lemma, see \cite{AM}]\label{lem-Krull.lem}
    Let $R$ be a Noetherian local ring with the unique maximal ideal $\mathfrak{m}$. Then for any ideal $I$ of $R$,
    \[\bigcap_{k\in\mathbb{N}_+}(I+\mathfrak{m}^k)=I.\]
\end{lemma}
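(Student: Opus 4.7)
The plan is to reduce the statement to the classical form of Krull's intersection theorem and then prove that form using the Artin-Rees lemma together with Nakayama's lemma.

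The first step is to pass to the quotient ring $\bar R := R/I$, which is again a Noetherian local ring with unique maximal ideal $\bar{\mathfrak{m}} := \mathfrak{m}/I$. Let $\pi : R \to \bar R$ be the quotient map. Since $\pi(\mathfrak{m}) = \bar{\mathfrak{m}}$ and $I \subseteq \ker\pi$, one checks that $\pi^{-1}(\bar{\mathfrak{m}}^k) = I + \mathfrak{m}^k$ for each $k \ge 1$. Hence
\[\bigcap_{k\in\mathbb{N}_+}(I+\mathfrak{m}^k) = \pi^{-1}\Bigl(\bigcap_{k\in\mathbb{N}_+}\bar{\mathfrak{m}}^k\Bigr),\]
and so the lemma reduces to proving that $\bigcap_{k}\bar{\mathfrak{m}}^k = 0$ in $\bar R$.

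For the reduced statement, write $(R,\mathfrak{m})$ again for the Noetherian local ring and set $N := \bigcap_{k\in\mathbb{N}_+}\mathfrak{m}^k$. The core input is the Artin-Rees lemma applied to the ideal $\mathfrak{m}$ and the submodule $N \subseteq R$: there exists $k_0 \in \mathbb{N}_+$ such that
\[\mathfrak{m}^k \cap N = \mathfrak{m}^{k-k_0}\bigl(\mathfrak{m}^{k_0}\cap N\bigr) \quad \text{for all } k \ge k_0.\]
By the definition of $N$ one has $N \subseteq \mathfrak{m}^k$ for every $k$, so both $\mathfrak{m}^k\cap N$ and $\mathfrak{m}^{k_0}\cap N$ equal $N$. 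Taking $k = k_0+1$ therefore yields $N = \mathfrak{m}\,N$.

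Finally, since $R$ is Noetherian the ideal $N$ is finitely generated, so Nakayama's lemma (applied to the local ring $(R,\mathfrak{m})$, in which $1+\mathfrak{m}$ consists of units) forces $N = 0$. Unwinding the reduction completes the proof. The only nontrivial ingredient is the Artin-Rees lemma, which I would simply invoke from Atiyah--Macdonald \cite{AM} rather than reprove; granted Artin-Rees, the remaining steps are standard and short. The main conceptual point to be careful about is the passage to the quotient, i.e., verifying that intersections and preimages interact correctly so that the reduction from the ideal version to the $I=0$ version is genuine.
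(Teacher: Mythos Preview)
Your proof is correct and follows the standard textbook route (reduction to $I=0$ via the quotient, then Artin--Rees plus Nakayama). The paper itself does not prove this lemma at all: it is stated with a reference to Atiyah--Macdonald \cite{AM} and used as a black box in the proof of Lemma~\ref{lem-I+mk}. So there is nothing to compare against; you have simply supplied the argument that the paper chose to cite rather than include.
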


It is well-known that the ring $\mathcal{O}_o$ is a Noetherian local ring, with the unique maximal ideal $\mathfrak{m}=(z_1,\ldots,z_n)$.

\subsection{Orthonormal basis of the Bergman space}
First, we give an order for $\mathbb{N}^n$.

Let $\alpha,\beta\in\mathbb{N}^n$ be two multi-indices, where $\alpha=(\alpha_1,\ldots,\alpha_n)$, $\beta=(\beta_1,\ldots,\beta_n)$. Denote $|\alpha|:=\alpha_1+\cdots+\alpha_n$ (and the same for $|\beta|$). We write $\alpha\prec\beta$, if and only if

(1) $|\alpha|<|\beta|$; or

(2) $|\alpha|=|\beta|$, and there exists $j\in \{1,\ldots,n\}$ such that $\alpha_n=\beta_n$, $\ldots$, $\alpha_{j+1}=\beta_{j+1}$, and $\alpha_{j}<\beta_{j}$.

Let $D\subseteq\mathbb{C}^n$ be a domain with $o\in D$, and $\phi$ a plurisubharmonic function on $D$. Denote
\[A_{\phi}^2(D):=\left\{f\in\mathcal{O}(D)\colon \|f\|_{D,\phi}^2<+\infty\right\},\]
where $\|f\|_{D,\phi}^2:=\int_D|f|^2e^{-\phi}$.

The following lemma shows that we can have an orthonormal basis of $A_{\phi}^2(D)$ satisfying some specific conditions.

For any $\xi\in\ell_1^{(n)}$, the functional
\begin{flalign*}
    \begin{split}
        A_{\phi}^2(D) &\longrightarrow \mathbb{C}\\
        f &\longmapsto (\xi\cdot f)(o),
    \end{split}
\end{flalign*}
is linear and continuous. Thus, by Riesz representation theorem, there exists some $T_{\phi}(\xi)\in A_{\phi}^2(D)$ such that $\langle f,T_{\phi}(\xi)\rangle_{\phi}=(\xi\cdot f)(o)$ for any $f\in A_{\phi}^2(D)$, where $\langle \cdot, \cdot \rangle_{\phi}$ is the inner product on the Hilbert space $A_{\phi}^2(D)$. $T_{\phi}$ gives an operator from $\ell_1^{(n)}$ to $A_{\phi}^2(D)$. 

\begin{lemma}[see the Appendix of \cite{BG3}]\label{lem-basis}
    There exist a subset $\mathbf{E}$ of $\mathbb{N}^n$, a sequence $\xi[\alpha]=(\xi[\alpha]_{\gamma})_{\gamma}\in\ell_0^{(n)}$, and an orthonormal basis $\{\sigma_{\alpha}\}_{\alpha\in\mathbf{E}}$ of $A_{\phi}^2(D)$, such that for any $\alpha\in\mathbf{E}$,
    
    (1) $\sigma_{\alpha}=T_{\varphi}(\xi[\alpha])$;

    (2) $\max\big\{\gamma\in\mathbb{N}^n : \xi[\alpha]_{\gamma}\neq 0\big\}=\alpha$;

    (3) $\min\big\{\gamma\in\mathbb{N}^n : D^{\gamma}(\sigma_{\alpha})(o)\neq 0\big\}=\alpha$,

where the maximum and minimum in (2) and (3) are w.r.t. `$\prec$'.
\end{lemma}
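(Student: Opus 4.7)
The plan is a straightforward Gram--Schmidt construction on the Riesz representatives of the Taylor-coefficient functionals, enumerated in the order $\prec$, with indices skipped whenever a linear dependence appears. For each $\gamma\in\mathbb{N}^n$ let $e_\gamma\in\ell_0^{(n)}$ denote the sequence with a $1$ in slot $\gamma$ and $0$ elsewhere, and set $u_\gamma:=T_\phi(e_\gamma)\in A_\phi^2(D)$, so that $\langle f,u_\gamma\rangle_\phi$ equals the $\gamma$-th Taylor coefficient of $f$ at $o$. Because $D$ is connected and a holomorphic function is determined by its Taylor series at an interior point, any $f\in A_\phi^2(D)$ orthogonal to every $u_\gamma$ vanishes; hence $\{u_\gamma\}_{\gamma\in\mathbb{N}^n}$ has dense linear span in $A_\phi^2(D)$.

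Since $\prec$ well-orders $\mathbb{N}^n$ with order type $\omega$, enumerate the multi-indices as $\alpha^{(0)}\prec\alpha^{(1)}\prec\cdots$ and proceed inductively: at step $k$, form
\[
w_{\alpha^{(k)}}:=u_{\alpha^{(k)}}-\sum_{\beta\in\mathbf{E},\,\beta\prec\alpha^{(k)}}\langle u_{\alpha^{(k)}},\sigma_\beta\rangle_\phi\,\sigma_\beta,
\]
and, if $w_{\alpha^{(k)}}\neq 0$, include $\alpha^{(k)}$ in $\mathbf{E}$ and set $\sigma_{\alpha^{(k)}}:=w_{\alpha^{(k)}}/\|w_{\alpha^{(k)}}\|_{D,\phi}$; otherwise skip the index. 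By linearity of $T_\phi$, the Gram--Schmidt coefficients immediately produce some $\xi[\alpha]\in\ell_0^{(n)}$ with $\sigma_\alpha=T_\phi(\xi[\alpha])$, settling condition (1); and the orthonormal family $\{\sigma_\alpha\}_{\alpha\in\mathbf{E}}$ is complete because its closed linear span equals that of $\{u_\gamma\}_{\gamma\in\mathbb{N}^n}$.

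Conditions (2) and (3) should fall out of the triangular nature of the construction. In the Gram--Schmidt expansion, $\sigma_\alpha$ is a finite linear combination of $\{u_\gamma:\gamma\preceq\alpha\}$ in which the coefficient of $u_\alpha$ equals $1/\|w_\alpha\|_{D,\phi}\neq 0$; translated through $T_\phi$ this is precisely (2). For (3), I would observe that whenever $\beta\prec\alpha$, the vector $u_\beta$ lies in the span of $\{\sigma_\gamma:\gamma\in\mathbf{E},\,\gamma\prec\alpha\}$ (this holds whether $\beta$ was kept or skipped, since a skip occurs precisely when $u_\beta$ already belongs to the earlier span). Orthonormality then yields $\langle\sigma_\alpha,u_\beta\rangle_\phi=0$ for every $\beta\prec\alpha$, while $\langle\sigma_\alpha,u_\alpha\rangle_\phi=\|w_\alpha\|_{D,\phi}>0$, and since $\langle\sigma_\alpha,u_\gamma\rangle_\phi$ is the $\gamma$-th Taylor coefficient of $\sigma_\alpha$ at $o$, condition (3) follows.

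The argument is essentially bookkeeping; no deeper functional analysis is needed beyond Riesz representation. The only point deserving care is the interplay between the skipped indices and the two triangular identities, but this is immediate by induction because at every stage the span of $\{\sigma_\gamma:\gamma\in\mathbf{E},\,\gamma\preceq\alpha^{(k)}\}$ coincides with the span of $\{u_{\alpha^{(j)}}:j\le k\}$; consequently neither the ``upper-triangular'' structure of $\xi[\alpha]$ nor the ``lower-triangular'' structure of the Taylor expansion of $\sigma_\alpha$ is disturbed by the skipped indices.
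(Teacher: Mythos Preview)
Your Gram--Schmidt argument is correct and is precisely the construction the paper has in mind (it does not prove the lemma itself but cites \cite{BG3}, and the remark following the lemma identifies $\mathbf{E}$ exactly as the set of indices that survive your skipping procedure). One small slip: $T_\phi$ is \emph{conjugate}-linear rather than linear, since $\langle f,T_\phi(\xi)\rangle_\phi=(\xi\cdot f)(o)$ is linear in $\xi$ while the inner product is conjugate-linear in its second slot; this only means the Gram--Schmidt coefficients enter $\xi[\alpha]$ with a complex conjugate, and your conclusions (1)--(3) are unaffected.
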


Here in fact, $\mathbf{E}$ is the set containing all $\alpha\in\mathbb{N}^n$ such that there exists an $f\in A_{\phi}^2(D)$ with $D^{\alpha}f(o)\neq 0$ and $D^{\beta}f(o)=0$ for all $\beta\prec\alpha$. For example, if $D$ is a bounded domain and $\mathcal{I}(\phi)_o=\mathcal{O}_o$, then $\mathbf{E}=\mathbb{N}^n$.

\section{Proof of Theorem \ref{thm-BFID.eq.CFID}}

Now we prove Theorem \ref{thm-BFID.eq.CFID}. For any $\xi\in\ell_0^{(n)}$, we denote
\[\mathrm{ord}_o(\xi):=\max\big\{k\in\mathbb{N} : \exists\alpha\in\mathbb{N}^n, \ \text{s.t.} \ |\alpha|=k, \ \xi_{\alpha}\neq 0\big\},\]
which coincides the order of $\xi$ seen as an element in $\mathbb{C}[z_1,\ldots,z_n]$. We first prove a special case of Theorem \ref{thm-BFID.eq.CFID}.

\begin{lemma}\label{lem-mj0.subset.I}
    Let $D$ be a bounded domain in $\mathbb{C}^n$, $I$ a proper ideal of $\mathcal{O}_o$ and $(F,o)\in\mathcal{O}_o$. Assume that $\mathfrak{m}^{j_0}\subseteq I$ for some $j_0\in\mathbb{N}_+$, where $\mathfrak{m}$ is the maximal ideal of $\mathcal{O}_o$. Then there exists $\eta\in \ell_I\cap\ell_0^{(n)}$ with $\mathrm{ord}_o(\eta)<j_0$ such that
    \[\frac{|(\eta\cdot F)(o)|^2}{K_{\eta,D}(o)}=C_{F,I}(D).\]
\end{lemma}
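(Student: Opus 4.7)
The plan is to exploit the hypothesis $\mathfrak{m}^{j_0}\subseteq I$ to collapse the problem to finite dimensions and then to identify the maximiser of $B(F,I,D)$ as the Riesz representative of the $L^2$-minimiser defining $C_{F,I}(D)$.

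First I would record the consequences of $\mathfrak{m}^{j_0}\subseteq I$. For any $\xi\in\tilde{\ell}_I$ and any $\alpha$ with $|\alpha|\ge j_0$, the germ of $z^\alpha$ lies in $I$, so $\xi_\alpha=(\xi\cdot z^\alpha)(o)=0$. Thus $\tilde{\ell}_I\subseteq\ell_0^{(n)}$ and every element has order $<j_0$, which is why the constraints $\eta\in\ell_0^{(n)}$ and $\mathrm{ord}_o(\eta)<j_0$ will be automatic once any nonzero $\eta\in\tilde{\ell}_I$ is produced. Moreover, via the pairing $(\xi,[f])\mapsto(\xi\cdot f)(o)$, the space $\tilde{\ell}_I$ identifies with $(\mathcal{O}_o/I)^*$, so $\dim\tilde{\ell}_I=\dim\mathcal{O}_o/I<+\infty$.

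Next I would analyse the Riesz map $T\colon\tilde{\ell}_I\to A^2(D)$ characterised by $\langle f,T(\xi)\rangle=(\xi\cdot f)(o)$. For $f\in A^2(D,I)$ and $\xi\in\tilde{\ell}_I$ the right hand side vanishes, so $T$ takes values in $A^2(D,I)^\perp$. It is injective because polynomials lie in $A^2(D)$ (as $D$ is bounded), hence $T(\xi)=0$ forces $\xi_\alpha=\langle z^\alpha,T(\xi)\rangle=0$ for every $\alpha$. For the codomain, the germ map $A^2(D)\to\mathcal{O}_o/I$ is surjective: every class in $\mathcal{O}_o/I$ is represented by the degree-$<j_0$ truncation of a Taylor expansion, which is a polynomial and hence in $A^2(D)$. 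Its kernel is $A^2(D,I)$, yielding $\dim A^2(D,I)^\perp=\dim A^2(D)/A^2(D,I)=\dim\mathcal{O}_o/I$. Comparing dimensions forces $T$ to be a bijection.

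Finally I would construct $\eta$. If $(F,o)\in I$, then $C_{F,I}(D)=0$ and, since $I$ is proper, any $\eta\in\ell_I$ gives $(\eta\cdot F)(o)=0$, so both sides vanish. Otherwise, the feasible set for $C_{F,I}(D)$ contains the Taylor polynomial of $F$ of degree $<j_0$ (the difference lies in $\mathfrak{m}^{j_0}\subseteq I$), so there is a unique minimiser $G\in A^2(D,I)^\perp\setminus\{0\}$ with $\|G\|_D^2=C_{F,I}(D)$ and $(G-F,o)\in I$. Set $\eta:=T^{-1}(G)\in\ell_I$. Cauchy--Schwarz yields $K_{\eta,D}(o)=\sup_{\|f\|_D\le 1}|\langle f,G\rangle|^2=\|G\|_D^2=C_{F,I}(D)$, while $(G-F,o)\in I$ together with $\eta\in\tilde{\ell}_I$ gives $(\eta\cdot F)(o)=(\eta\cdot G)(o)=\langle G,G\rangle=C_{F,I}(D)$; dividing delivers the desired identity. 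The principal obstacle I foresee is establishing the bijectivity of $T$, which rests on the simultaneous finite-dimensionality of $\tilde{\ell}_I$ and $A^2(D,I)^\perp$; both come from $\mathfrak{m}^{j_0}\subseteq I$ and the boundedness of $D$, so this is precisely where the hypotheses of the lemma are used.
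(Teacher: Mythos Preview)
Your proof is correct and ends at the same place as the paper---both set $\eta$ to be the Riesz preimage of the $L^2$-minimiser $G\in A^2(D,I)^\perp$ and compute $K_{\eta,D}(o)=\|G\|_D^2$, $(\eta\cdot F)(o)=\|G\|_D^2$---but the route to showing $\eta\in\ell_I\cap\ell_0^{(n)}$ with $\mathrm{ord}_o(\eta)<j_0$ is different. The paper invokes the explicit orthonormal basis of Lemma~\ref{lem-basis}, expands $G=\sum_{|\alpha|<j_0}a_\alpha\sigma_\alpha$, sets $\eta=\sum\overline{a_\alpha}\,\xi[\alpha]$, and then verifies $\eta\in\ell_I$ by a separate Taylor-splitting argument (writing each $(f,o)\in I$ as polynomial plus $\mathfrak{m}^j$-tail and pairing against $G$). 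You instead observe directly that $\mathfrak{m}^{j_0}\subseteq I$ forces $\tilde{\ell}_I\subseteq\{\xi\in\ell_0^{(n)}:\mathrm{ord}_o(\xi)<j_0\}$, then use a dimension count $\dim\tilde{\ell}_I=\dim(\mathcal{O}_o/I)^*=\dim A^2(D)/A^2(D,I)=\dim A^2(D,I)^\perp$ to conclude that $T|_{\tilde{\ell}_I}\colon\tilde{\ell}_I\to A^2(D,I)^\perp$ is a bijection, so $\eta:=T^{-1}(G)$ automatically lies in $\ell_I$ with the required order bound. Your argument is more conceptual and dispenses with the orthonormal-basis lemma altogether; the paper's is more hands-on and yields $\eta$ explicitly in coordinates.
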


\begin{proof}
    The case $(F,o)\in I$ is trivial, where we can choose $\eta=(1,0,\ldots,0,\ldots)$ for example. In the following, we assume $(F,o)\notin I$.

    First, we can actually assume that $F\in A^2(D)$. For the reason, if we take the order$\le j_0$ part of the Taylor expansion of $F$ at $o$, denoted by $\tilde{F}$, then $(F-\tilde{F},o)\in\mathfrak{m}^{j_0}\subseteq I$, and $\tilde{F}\in A^2(D)$ since it is a polynomial and $D$ is bounded. Now for any $\xi\in \ell_I$, it follows that $(\xi\cdot F)(o)=(\xi\cdot \tilde{F})(o)$. Replacing $F$ by $\tilde{F}$, it is enough to assume $F\in A^2(D)$.

Since $F\in A^2(D)$ with $(F,o)\notin I$, $C_{F,I}(D)\le\|F\|_D^2<+\infty$, then there exists a unique $G\in A^2(D,I)^{\perp}\setminus\{0\}$ such that $\|G\|^2_D=C_{F,I}(D)$ and $(G-F,o)\in I$. We can write that
\[G=\sum_{\alpha\in\mathbb{N}^n}a_{\alpha}\sigma_{\alpha}, \ a_{\alpha}\in\mathbb{C},\]
where $\{\sigma_{\alpha}\}_{\alpha\in\mathbb{N}^n}$ is the orthonormal basis of $A^2(D)$ in Lemma \ref{lem-basis}. Note that $D^{\beta}(\sigma_{\alpha})(o)=0$ for any $\beta\prec\alpha$, then $\sigma_{\alpha}\in A^2(D,\mathfrak{m}^{j_0})\subseteq A^2(D,I)$ for any $\alpha$ with $|\alpha|\ge j_0$. Thus, $\langle G, \sigma_{\alpha}\rangle_D=0$ if $|\alpha|\ge j_0$, where $\langle\cdot, \cdot\rangle_D$ is the inner product on $A^2(D)$. It follows that we have
\[G=\sum_{|\alpha|\le j_0-1}a_{\alpha}\sigma_{\alpha}, \ b_{\alpha}\in\mathbb{C},\]
which is a finite summation. Set
\[\eta=\sum_{|\alpha|\le j_0-1}\overline{a_{\alpha}}\cdot\xi[\alpha]\in\ell_0^{(n)},\]
where $\xi[\alpha]\in\ell^{(n)}_0$ satisfying $T(\xi[\alpha])=\sigma_{\alpha}$ such as which in Lemma \ref{lem-basis}. Then $\mathrm{ord}_o(\eta)<j_0$, and $T(\eta)=G$. In addition, we have
\[K_{\eta,D}(o)=\sup_{f\in A^2(D)}\frac{|(\eta\cdot f)(o)|^2}{\|f\|_D^2}=\sup_{f\in A^2(D)}\frac{|\langle f, G\rangle_D|^2}{\|f\|_D^2}=\|G\|_D^2,\]
yielding that
\[\frac{|(\eta\cdot F)(o)|^2}{K_{\eta, D}(o)}=\frac{|\langle F, G\rangle_D|^2}{\|G\|_D^2}=\frac{\Big|\|G\|_D^2+\langle F-G, G\rangle_D\Big|^2}{\|G\|_D^2}=\|G\|_D^2=C_{F,I}(D).\]

Now, we only need to verify $\eta\in\ell_{I}$. For any $(f,o)\in I$, by Taylor's expansion, for any $j\ge j_0$, we can write that
\[f=f_j+\tilde{f}_j,\]
where $f_j\in\mathbb{C}[z_1,\cdots,z_n]$ with $\mathrm{deg\ } f_j\le j$, and $\tilde{f}_j\in\mathfrak{m}^{j}\subseteq I$.  Since $D$ is bounded, $f_j\in A^2(D)$. We also have $(f_j,o)=(f-\tilde{f}_j,o)\in I$, thus $f_j\in A^2(D,I)$. Then by that $T(\eta)=G\in A^2(D,I)^{\perp}$, it holds that
\[(\eta\cdot f_j)(o)=\langle f_j, G\rangle_D=0.\]
Since $\tilde{f}_j\in\mathfrak{m}^{j}$ and $\mathrm{ord}_o(\eta)<j_0\le j$, we have $(\eta\cdot \tilde{f}_j)(o)=0$. Thus, $(\eta\cdot f)(o)=0$, for all $(f,o)\in I$. The proof of $\eta\in\ell_I$ is completed, so the desired result holds.
\end{proof}

Using Krull's lemma, we give an approximation result for the minimal $L^2$ integrals.

\begin{lemma}\label{lem-I+mk}
    Let $D$ be a bounded domain in $\mathbb{C}^n$, $I$ a proper ideal of $\mathcal{O}_o$, and $(F,o)\in \mathcal{O}_o$. Then
    \begin{equation}\label{eq-lim.CFI+mjD.eq.CFID}
        \lim_{j\to\infty}C_{F,I+\mathfrak{m}^k}(D)=C_{F,I}(D).
    \end{equation}
\end{lemma}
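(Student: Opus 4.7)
The plan is to prove the two inequalities separately. The easy direction is $\lim_{k\to\infty} C_{F,I+\mathfrak{m}^k}(D)\le C_{F,I}(D)$: since $I\subseteq I+\mathfrak{m}^k$, every competitor $\tilde F$ for $C_{F,I}(D)$ is also a competitor for $C_{F,I+\mathfrak{m}^k}(D)$, so $C_{F,I+\mathfrak{m}^k}(D)\le C_{F,I}(D)$ for each $k$. Moreover the sequence is nondecreasing in $k$ (the admissible set shrinks), hence the limit exists in $[0,+\infty]$ and is bounded above by $C_{F,I}(D)$.

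For the reverse direction I would reduce to the case where the limit on the left is finite (otherwise there is nothing to prove). Let $G_k\in A^2(D)$ denote the unique norm-minimizer furnished by the Hilbert-space projection, so $\|G_k\|_D^2=C_{F,I+\mathfrak{m}^k}(D)$ and $(G_k-F,o)\in I+\mathfrak{m}^k$. The sequence $\{G_k\}$ is norm-bounded in the Hilbert space $A^2(D)$, so by weak compactness (equivalently, by Montel's theorem applied to the locally uniformly bounded family of holomorphic functions) some subsequence $G_{k_j}$ converges weakly in $A^2(D)$, and locally uniformly on $D$, to a limit $G_\infty\in A^2(D)$, with the standard lower-semicontinuity estimate $\|G_\infty\|_D^2\le\liminf_{j}\|G_{k_j}\|_D^2$.

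The heart of the argument is to show that $(G_\infty-F,o)\in I$. Fix an arbitrary $k_0\in\mathbb{N}_+$. For all $k_j\ge k_0$ we have $I+\mathfrak{m}^{k_j}\subseteq I+\mathfrak{m}^{k_0}$, hence $(G_{k_j}-F,o)\in I+\mathfrak{m}^{k_0}$. The membership in $I+\mathfrak{m}^{k_0}$ depends only on the $(k_0-1)$-jet at $o$, and the set of admissible $(k_0-1)$-jets is a (finite-dimensional, hence closed) linear subspace of $\mathcal{O}_o/\mathfrak{m}^{k_0}$. Local uniform convergence $G_{k_j}\to G_\infty$ on a neighborhood of $o$ gives convergence of all Taylor coefficients at $o$, so the jet of $G_\infty-F$ lies in this closed subspace, i.e.\ $(G_\infty-F,o)\in I+\mathfrak{m}^{k_0}$. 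Since $k_0$ was arbitrary, Krull's lemma (Lemma \ref{lem-Krull.lem}) yields
\[
(G_\infty-F,o)\in\bigcap_{k_0\in\mathbb{N}_+}\bigl(I+\mathfrak{m}^{k_0}\bigr)=I.
\]
Therefore $G_\infty$ is a competitor for $C_{F,I}(D)$, giving $C_{F,I}(D)\le\|G_\infty\|_D^2\le\lim_{k\to\infty}C_{F,I+\mathfrak{m}^k}(D)$, which combined with the easy inequality proves \eqref{eq-lim.CFI+mjD.eq.CFID}. The same argument by contradiction handles the case $C_{F,I}(D)=+\infty$: a finite limit on the left would produce a weak limit $G_\infty$ as above with $\|G_\infty\|_D<+\infty$ and $(G_\infty-F,o)\in I$, forcing $C_{F,I}(D)<+\infty$.

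The only genuine obstacle is the passage from the membership conditions $(G_k-F,o)\in I+\mathfrak{m}^k$, which become vacuous in the weak limit if handled naively, to a meaningful condition on $G_\infty$. The trick is to freeze $k_0$ before passing to the limit and only afterwards to let $k_0\to\infty$, invoking Krull's lemma to collapse the infinite intersection back down to $I$.
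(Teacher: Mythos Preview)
Your proof is correct and follows essentially the same route as the paper's: take the minimizers $G_k$, extract a locally uniformly convergent subsequence via Montel, use lower semicontinuity of the norm, show $(G_\infty-F,o)\in I+\mathfrak{m}^{k_0}$ for every fixed $k_0$, and conclude via Krull's lemma. The only difference is that where the paper cites \cite[Chapter~2.2.3]{GR} for the closedness of the condition $(\,\cdot\,,o)\in I+\mathfrak{m}^{k_0}$ under local uniform convergence, you give the argument explicitly via finite jets, which is a perfectly good substitute.
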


\begin{proof}
Clearly, $C_{F,I+\mathfrak{m}^k}(D)$ is an increasing sequence in $k$, and $C_{F,I+\mathfrak{m}^k}(D)\le C_{F,I}(D)$ for any $k$. Without loss of generality, we assume $\lim_{k\to\infty}C_{F,I+\mathfrak{m}^k}(D)<+\infty$. Then there exists $G_k\in A^2(D)$ with
\[(G_k-F,o)\in I+\mathfrak{m}^k, \ \|G_k\|_D^2=C_{F,I+\mathfrak{m}^k}(D), \ \forall k\in\mathbb{N}_+.\]
Since $\lim_{k\to\infty}C_{F,I+\mathfrak{m}^k}(D)<+\infty$, by Montel's theorem, we can extract a subsequence of $G_k$ compactly convergent to some $G\in A^2(D)$. Fatou's lemma shows $\|G\|_D^2\le\lim_{k\to\infty}C_{F,I+\mathfrak{m}^k}(D)$. We also have $(G-F,o)\in I+\mathfrak{m}^k$ for any $k$ (see \cite[Chapter 2.2.3]{GR}). Krull's lemma indicates
\[\bigcap_{k\ge 1}(I+\mathfrak{m}^k)=I.\]
Then $(G-F,o)\in I$, which implies $\|G\|_D^2\ge C_{F,I}(D)$. This confirms (\ref{eq-lim.CFI+mjD.eq.CFID}). 
\end{proof}

For an unnecessarily bounded domain, we will use the following lemma to approximate the minimal $L^2$ integral.
\begin{lemma}\label{lem-bounded.approx.unbounded}
    Let $D$ be a domain in $\mathbb{C}^n$ containing $o$, $I$ a proper ideal of $\mathcal{O}_o$, and $(F,o)\in \mathcal{O}_o$. Let $\{D_i\}_{i=1}^{\infty}$ be a sequence of domains such that $D_i\subseteq D_{i+1}$, $\bigcup_{i\ge 1}D_i=D$, and $o\in D_1$. Then
\[\lim_{i\to \infty}C_{F,I}(D_i)=C_{F,I}(D).\]
\end{lemma}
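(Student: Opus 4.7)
The plan is to first establish monotonicity and then extract a limit of minimizers, using Krull's lemma exactly as in Lemma \ref{lem-I+mk} to pass the germ condition to the limit. Monotonicity is immediate: for any $\tilde{F}\in A^2(D)$ with $(\tilde{F}-F,o)\in I$, the restriction $\tilde{F}|_{D_i}$ belongs to $A^2(D_i)$ with $\|\tilde{F}|_{D_i}\|_{D_i}^2\le\|\tilde{F}\|_D^2$ and still satisfies the germ condition, so $C_{F,I}(D_i)\le C_{F,I}(D_{i+1})\le C_{F,I}(D)$. Hence $L:=\lim_{i\to\infty}C_{F,I}(D_i)$ exists and $L\le C_{F,I}(D)$. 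It remains to prove the reverse inequality, which is trivial when $L=+\infty$, so assume $L<+\infty$.

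For $i$ large enough that $C_{F,I}(D_i)<+\infty$, Montel's theorem applied inside $A^2(D_i)$ (as recalled in the introduction) yields a unique minimizer $G_i\in A^2(D_i)$ with $(G_i-F,o)\in I$ and $\|G_i\|_{D_i}^2=C_{F,I}(D_i)\le L$. For any compact $K\subset D$ we have $K\subset D_i$ for all large $i$, and the mean-value inequality bounds $\{G_i\}$ uniformly on $K$ in terms of $L$ and $\operatorname{dist}(K,\partial D)$. Thus $\{G_i\}$ is a normal family on $D$, and a diagonal extraction gives a subsequence $G_{i_k}$ converging locally uniformly to some $G\in\mathcal{O}(D)$. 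Fatou's lemma, applied first on each $D_j$ and then letting $j\to\infty$, gives
\[
\|G\|_D^2\le\liminf_{k\to\infty}\|G_{i_k}\|_{D_{i_k}}^2\le L.
\]

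The main point, and the step I expect to be the most delicate, is to verify that $(G-F,o)\in I$. Local uniform convergence of holomorphic functions implies convergence of all Taylor coefficients at $o$, so for each fixed $k\in\mathbb{N}_+$ the image of $G_{i_k}-F$ in the finite-dimensional quotient $\mathcal{O}_o/\mathfrak{m}^k$ converges to the image of $G-F$. Since $(G_{i_k}-F,o)\in I\subseteq I+\mathfrak{m}^k$ and $(I+\mathfrak{m}^k)/\mathfrak{m}^k$ is a (closed) finite-dimensional subspace of $\mathcal{O}_o/\mathfrak{m}^k$, the limit satisfies $(G-F,o)\in I+\mathfrak{m}^k$. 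Since this holds for every $k$, Krull's lemma (Lemma \ref{lem-Krull.lem}) applied to the Noetherian local ring $\mathcal{O}_o$ gives
\[
(G-F,o)\in\bigcap_{k\ge 1}(I+\mathfrak{m}^k)=I.
\]
Therefore $G$ is admissible in the definition of $C_{F,I}(D)$, so $C_{F,I}(D)\le\|G\|_D^2\le L$, completing the proof. The argument mirrors Lemma \ref{lem-I+mk}; the only essential ingredients beyond monotonicity are Montel's theorem to produce the limit $G$ and Krull's lemma to capture ideal membership from coefficient-wise convergence.
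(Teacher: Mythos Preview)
Your proof is correct and follows the same approach as the paper: monotonicity, extraction of a locally uniform limit of minimizers via Montel, Fatou to bound the norm, and closedness of ideal membership under local uniform convergence. The only difference is that the paper simply asserts $(h-F,o)\in I$ (relying on the standard closedness of ideals in $\mathcal{O}_o$, as invoked via \cite[Chapter 2.2.3]{GR} in the preceding lemma), whereas you spell this out explicitly through convergence of Taylor jets and Krull's lemma---which is precisely the mechanism behind that closedness.
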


\begin{proof}
    Note that $C_{F,I}(D_i)$ is increasing in $i$ and $C_{F,I}(D_i)\le C_{F,I}(D)$ for any $i$, then without loss of generality, we assume $\lim_{i\to \infty}C_{F,I}(D_i)<+\infty$. For any $i$, let $h_i\in A^2(D_i)$ with $(h_i-F,o)\in I$ and $\|h_i\|_{D_i}^2=C_{F,I}(D_i)$. Then Montel's theorem implies that there exists a subsequence of $\{h_i\}$ which converges to some $h\in\mathcal{O}(D)$ on any compact subset of $D$. According to Fatou's lemma, we have $\|h\|_D^2\le\lim_{i\to \infty}C_{F,I}(D_i)$. We also have $(h-F,o)\in I$, which implies $\|h\|_D^2\ge C_{F,I}(D)$. Then the proof is completed.
\end{proof}

Now we give the proof of Theorem \ref{thm-BFID.eq.CFID}.

\begin{proof}[\textbf{Proof of Theorem \ref{thm-BFID.eq.CFID}}]
We only need to prove $B^{\circ}(F,I,D)\ge C_{F,I}(D)$, where $(F,o)\notin I$.

First, we assume $D$ is bounded. For the ideal $I$ of $\mathcal{O}_o$, since $\mathfrak{m}^k\subset I+\mathfrak{m}^k$, Lemma \ref{lem-mj0.subset.I} implies
\[B^{\circ}(F,I+\mathfrak{m}^k,D)=C_{F,I+\mathfrak{m}^k}(D), \ \forall k\in\mathbb{N}_+,\]
yielding that
\begin{equation}\label{eq-B.>.C.-1/j}
    B^{\circ}(F,I,D)\ge B^{\circ}(F,I+\mathfrak{m}^k,D)=C_{F,I+\mathfrak{m}^k}(D), \ \forall k\in\mathbb{N}_+.
\end{equation}
Combining with Lemma \ref{lem-I+mk}, we get
\[B^{\circ}(F,I,D)\ge C_{F,I}(D).\]

If $D$ is an unbounded domain, we choose an increasing sequence of bounded domains $\{D_i\}_{i=1}^{\infty}$ such that $\bigcup_{i\ge 1}D_i=D$, and $o\in D_1$, then Lemma \ref{lem-bounded.approx.unbounded} implies
\[B^{\circ}(F,I,D)\ge\limsup_{i\to \infty} B^{\circ}(F,I,D_i)=\lim_{i\to \infty} C_{F,I}(D_i)=C_{F,I}(D).\]

At last, the case $(F,o)\in I$ is trivial.

The proof of Theorem \ref{thm-BFID.eq.CFID} is completed.
\end{proof}

We give some remarks here.

\begin{remark}\label{rem-generalized.separation}
    We can reprove Proposition \ref{prop-separation} by Theorem \ref{thm-BFID.eq.CFID} without using the separation theorem in functional analysis theory:
    
    \emph{for any proper ideal $I$ of $\mathcal{O}_o$, and any $(F,o)\in\mathcal{O}_o$, there exists some $\xi\in\ell_I\cap\ell^{(n)}_0$, such that $(\xi\cdot F)(o)\neq 0$.}
    
    This is due to $B^{\circ}(F,I,D)=C_{F,I}(D)\in (0,+\infty)$ for a domain $D$ containing $o$ and $F\in A^2(D)$.
\end{remark}

\begin{remark}\label{rem-B.circ}
    From the proof of Theorem \ref{thm-BFID.eq.CFID}, we can see
    \[C_{F,I}(D)=\sup_{k\ge 1}\sup_{\xi\in\ell_{I+\mathfrak{m}^k}}\frac{|(\xi\cdot F)(o)|^2}{K_{\xi,D}(o)}=\sup_{k\ge 1}\min_{\xi\in\ell_{I+\mathfrak{m}^k}}\frac{|(\xi\cdot F)(o)|^2}{K_{\xi,D}(o)},\]
    where $\ell_{I+\mathfrak{m}^k}$ can be seen as a finite-dimensional linear subspace of $\mathbb{C}[z_1,\ldots,z_n]$.
\end{remark}

\section{Proofs of Corollary \ref{cor-dense} and Corollary \ref{cor-jumping.number.min.xi.cse}}

We prove Corollary \ref{cor-dense} and Corollary \ref{cor-jumping.number.min.xi.cse} in this section.

\begin{proof}[\textbf{Proof of Corollary \ref{cor-dense}}]
It is clear that $T(\ell_I)\subseteq A^2(D,I)^{\perp}$.

Now fix any $F\in A^2(D,I)^{\perp}\setminus\{0\}$. Then by the definition, $C_{F,I}(D)=\|F\|_D^2$. According to Theorem \ref{thm-BFID.eq.CFID}, we can find a sequence $\{\xi_j\}_{j\in\mathbb{N}_+}\subset\ell_I\cap\ell_0^{(n)}$ such that
\[\lim_{j\to+\infty}\frac{|(\xi_j\cdot F)(o)|^2}{K_{\xi_j,D}(o)}=C_{F,I}(D)=\|F\|_D^2.\]
Denote $T(\xi_j)=g_j\in A^2(D,I)^{\perp}$. It follows that
\[K_{\xi_j,D}(o)=\sup_{f\in A^2(D)}\frac{|\langle f, g_j\rangle_D|^2}{\|f\|_D^2}=\|g_j\|_D^2, \ \forall j\in\mathbb{N}_+,\]
which verifies that
\[\|F\|_D^2=\lim_{j\to+\infty}\frac{|\langle F, g_j \rangle_D|^2}{K_{\xi_j,D}(o)}=\lim_{j\to +\infty}\frac{|\langle F, g_j \rangle_D|^2}{\|g_j\|_D^2}.\]
Set
\[\eta_j:=e^{\sqrt{-1}\theta_j}\frac{\|F\|_D}{\|g_j\|_D}\cdot \xi_j\in\ell_I\cap\ell_0^{(n)}, \ \forall j\in\mathbb{N}_+,\]
and $G_j=T(\eta_j)\in A^2(D,I)^{\perp}$, where $\theta_j\in \mathbb{R}$ such that $\langle F,G_j\rangle_D\in\mathbb{R}_{\ge 0}$. Then $\|G_j\|_D^2=\|F\|_D^2$, and 
\[\lim_{j\to +\infty}\langle F,G_j\rangle_D=\lim_{j\to +\infty}\mathrm{Re}\langle F,G_j\rangle_D=\|F\|_D^2.\]
Now we deduce that
\begin{flalign*}
    \begin{split}
        \|F-G_j\|_D^2=\|F\|_D^2+\|G_j\|_D^2-2\mathrm{Re}\langle F, G_j\rangle_D\to 0, \ j\to +\infty,
    \end{split}
\end{flalign*}
which implies $T(\eta_j)=G_j\to F$ as $j\to +\infty$ in the topology of $A^2(D)$. 

The proof is completed.
\end{proof}

\begin{proof}[\textbf{Proof of Corollary \ref{cor-jumping.number.min.xi.cse}}]
    It is proved in \cite{BGY22} that $c_o^F(\varphi)\le \gamma_{\xi}(\varphi)$ for any $\xi\in\ell_0^{(n)}$ with $(\xi\cdot F)(o)\neq 0$ (one can deduce this from the equality (\ref{eq-gamma.xi.varphi.equals.inf})).
    
    In addition, for any $\xi\in\ell_{\mathcal{I}_+(c_o^F(\varphi)\varphi)_o}$ with $(\xi\cdot F)(o)\neq 0$, by (\ref{eq-gamma.xi.varphi.equals.inf}), it holds that $\gamma_{\xi}(\varphi)=c_o^F(\varphi)$. Since $(F,o)\notin \mathcal{I}_+(c_o^F(\varphi)\varphi)_o$, according to Remark \ref{rem-generalized.separation}, there exists some $\eta\in\ell_0^{(n)}\cap\ell_{\mathcal{I}_+(c_o^F(\varphi)\varphi)_o}$ satisfying $(\eta\cdot F)(o)\neq 0$, which implies $\gamma_{\eta}(\varphi)=c_o^F(\varphi)$. Thus, we get
    \[c_o^F(\varphi)=\min_{\xi\in\ell_0^{(n)}, \ (\xi\cdot F)(o)\neq 0}\gamma_{\xi}(\varphi).\]
\end{proof}

\section{A weighted version}

A weighted version of generalized Bergman kernels was introduced in \cite{BG2}, and used to obtain a (sharp) effectiveness result for the $L^p$ strong openness property, which reproved some results in \cite{GY23}.

For the purpose of comparing the results in \cite{BG2} and \cite{GY23}, we demonstrate a weighted version of Theorem \ref{thm-BFID.eq.CFID} in this section. Let $D$ be a domain in $\mathbb{C}^n$ containing $o$, and $\psi$ a plurisubharmonic function on $D$. Denote by 
\[A_{\psi}^2(D):=\Big\{f\in\mathcal{O}(D): \int_D|f|^2e^{-\psi}<+\infty\Big\}\]
the weighted Bergman space and $\|f\|_{D,\psi}:=\left(\int_D|f|^2e^{-\psi}\right)^{1/2}$ the norm on the space.

For any $\xi\in\ell_1^{(n)}$, the \emph{weighted Bergman kernel with respect to $\xi$} on $D$ is denoted by (see \cite{BG2}):
\[K^{\psi}_{\xi,D}(z):=\sup\big\{|(\xi\cdot f)(z)|^2\colon f\in A^2_{\psi}(D), \ \|f\|_{D,\psi}^2\le 1\big\}, \ \forall z\in D.\]
For any proper ideal $I$ of $\mathcal{O}_o$ satisfying $I\not\supseteq\mathcal{I}(\psi)_o$ and $(F,o)\in \mathcal{O}_o$, set
\[B_{\psi}(F,I,D):=\sup_{\xi\in\ell_I}\frac{|(\xi\cdot F)(o)|^2}{K^{\psi}_{\xi,D}(o)},\]
and
\[B^{\circ}_{\psi}(F,I,D):=\sup_{\xi\in\ell_I\cap\ell_0^{(n)}}\frac{|(\xi\cdot F)(o)|^2}{K^{\psi}_{\xi,D}(o)}.\]
In addition, recall the minimal $L^2$ integral:
\[C_{F,\psi,I}(D):=\inf\big\{\|\tilde{F}\|^2_{D,\psi}\colon \tilde{F}\in A^2_{\psi}(D), \ (\tilde{F}-F,o)\in I\big\}.\]

One can establish the following weighted version of Theorem \ref{thm-BFID.eq.CFID}, where the detail of the proof will be omitted here.
\begin{theorem}
    The following equality holds:
    \[B^{\circ}_{\psi}(F,I,D)=B_{\psi}(F,I,D)=C_{F,\psi,I}(D).\]
\end{theorem}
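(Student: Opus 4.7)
The plan is to mirror the proof of Theorem \ref{thm-BFID.eq.CFID}, systematically replacing $A^2(D)$, $\|\cdot\|_D$, $K_{\xi,D}$, $C_{F,I}(D)$, $T$, and $\langle\cdot,\cdot\rangle_D$ by their weighted counterparts $A^2_\psi(D)$, $\|\cdot\|_{D,\psi}$, $K^\psi_{\xi,D}$, $C_{F,\psi,I}(D)$, $T_\psi$, and $\langle\cdot,\cdot\rangle_\psi$. The inequality $B^\circ_\psi(F,I,D)\le B_\psi(F,I,D)\le C_{F,\psi,I}(D)$ follows verbatim from the identity $(\xi\cdot F)(o)=(\xi\cdot\tilde F)(o)$ whenever $\xi\in\ell_I$ and $\tilde F\in A^2_\psi(D)$ satisfies $(\tilde F-F,o)\in I$, so all of the content is in the reverse inequality $B^\circ_\psi(F,I,D)\ge C_{F,\psi,I}(D)$.

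The three-step template of Section 3 carries over. In the first step, assume $D$ is bounded and $\mathfrak{m}^{j_0}\subseteq I$. Apply Lemma \ref{lem-basis} with $\phi=\psi$ to obtain an orthonormal basis $\{\sigma_\alpha\}_{\alpha\in\mathbf{E}}$ of $A^2_\psi(D)$ together with $\xi[\alpha]\in\ell_0^{(n)}$ satisfying $T_\psi(\xi[\alpha])=\sigma_\alpha$, and let $G\in A^2_\psi(D,I)^\perp$ be the norm minimizer realizing $C_{F,\psi,I}(D)$ (the case $C_{F,\psi,I}(D)=+\infty$ being trivial). Since $\sigma_\alpha\in A^2_\psi(D,\mathfrak{m}^{j_0})\subseteq A^2_\psi(D,I)$ whenever $|\alpha|\ge j_0$, the expansion of $G$ truncates to the finite sum $G=\sum_{|\alpha|<j_0,\,\alpha\in\mathbf{E}}a_\alpha\sigma_\alpha$. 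Setting $\eta:=\sum\overline{a_\alpha}\,\xi[\alpha]\in\ell_0^{(n)}$, the conjugate linearity of $T_\psi$ yields $T_\psi(\eta)=G$, from which $K^\psi_{\eta,D}(o)=\|G\|_{D,\psi}^2$ and then $|(\eta\cdot F)(o)|^2/K^\psi_{\eta,D}(o)=C_{F,\psi,I}(D)$ follow by exactly the manipulation of Lemma \ref{lem-mj0.subset.I}. In the second step, apply this with $I$ replaced by $I+\mathfrak{m}^k$ and pass to the limit in $k$ via Krull's lemma together with a weighted Montel extraction, the latter being available because $\psi$ is upper semicontinuous and hence locally bounded above on $D$, so bounded subsets of $A^2_\psi(D)$ form normal families. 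In the third step, exhaust an unbounded $D$ by bounded subdomains $D_i\nearrow D$ and pass to the limit via weighted Montel once more.

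The delicate point is the verification $\eta\in\ell_I$ in the first step. The unweighted argument truncates a germ $(f,o)\in I$ to a polynomial $f_j$, uses boundedness of $D$ to place $f_j\in A^2(D)$, and concludes from $\langle f_j,G\rangle_D=0$. In the weighted setting, polynomial truncations may fail to lie in $A^2_\psi(D)$ when $e^{-\psi}$ is not locally integrable near $o$, so $f_j$ must be replaced by a genuine $A^2_\psi(D)$-lift of the jet of $f$ modulo $\mathfrak{m}^{j_0}$. The standing hypothesis $I\not\supseteq\mathcal{I}(\psi)_o$ is precisely what makes such a lift available: after reducing to the case $I\subseteq\mathcal{I}(\psi)_o$ by replacing $I$ with $I\cap\mathcal{I}(\psi)_o$ (which leaves $C_{F,\psi,I}(D)$ unchanged, as any $\tilde F\in A^2_\psi(D)$ automatically has germ in $\mathcal{I}(\psi)_o$), the jet image of $A^2_\psi(D,I)$ in $\mathcal{O}_o/\mathfrak{m}^{j_0}$ covers the relevant classes, so that $(\eta\cdot f)(o)=\langle\text{lift},G\rangle_\psi=0$. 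Once this lifting step is handled, the remainder of the proof is a line-by-line transcription of Lemmas \ref{lem-mj0.subset.I}, \ref{lem-I+mk}, and \ref{lem-bounded.approx.unbounded}, followed by the three-step chain of inequalities.
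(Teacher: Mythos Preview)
Your route differs from the paper's. Rather than carrying the weight $\psi$ through all three steps, the paper truncates it first: set $\psi_j:=\max\{\psi,-j\}$, so that $e^{-\psi_j}\le e^j$ is bounded and hence, on a bounded $D$, every polynomial lies in $A^2_{\psi_j}(D)$. The proof of Theorem~\ref{thm-BFID.eq.CFID}---in particular the verification $\eta\in\ell_I$ by polynomial truncation of an arbitrary germ in $I$---then transfers verbatim to the weight $\psi_j$, giving $B^\circ_{\psi_j}(F,I,D)=C_{F,\psi_j,I}(D)$, and one finishes by letting $j\to\infty$.

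The lifting step in your direct approach does not work as stated. Take $n=2$, $D$ the unit polydisc, $\psi=2\log|z_1|$, $I=(z_2,z_1^2)$; then $\mathfrak m^2\subseteq I$ and $\mathcal I(\psi)_o=(z_1)\not\subseteq I$, yet every $g\in A^2_\psi(D)$ vanishes on $\{z_1=0\}$, so $\partial_{z_2}g(o)=0$ and no element of $A^2_\psi(D)$ has the same $1$-jet as $z_2\in I$. Thus the ``jet image of $A^2_\psi(D,I)$'' does not cover $I/\mathfrak m^{j_0}$. Your preliminary reduction to $I\subseteq\mathcal I(\psi)_o$ is also problematic: passing to $I':=I\cap\mathcal I(\psi)_o$ gives $\ell_{I'}\supseteq\ell_I$ and hence enlarges $B^\circ_\psi$, which is the wrong direction for recovering the inequality for the original $I$, and the inclusion $\mathfrak m^{j_0}\subseteq I'$ may fail. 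A direct argument is possible, but it requires a different idea: the functional $g\mapsto\langle g,G\rangle_\psi$ descends to the finite-dimensional jet image $V\subseteq\mathcal O_o/\mathfrak m^{j_0}$ and vanishes on $V\cap(I/\mathfrak m^{j_0})$, so by elementary linear algebra it extends to a functional on $\mathcal O_o/\mathfrak m^{j_0}$ vanishing on all of $I/\mathfrak m^{j_0}$; the corresponding $\eta'\in\ell_I\cap\ell_0^{(n)}$ still satisfies $T_\psi(\eta')=G$. The paper's truncation avoids this subtlety entirely.
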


\begin{proof}[Sketch of the proof]
    Let $\psi_j:=\max\{\psi,-j\}$ for each $j\in\mathbb{N}_+$. Since each $\psi_j$ does not have singularity, we can repeat the process of the proof of Theorem \ref{thm-BFID.eq.CFID}, and obtain
    \[B^{\circ}_{\psi_j}(F,I,D)=B_{\psi_j}(F,I,D)=C_{F,\psi_j,I}(D).\]
    Then by approximation, the same equality for $\psi$ can be verified. 
\end{proof}

\vspace{.1in} {\em Acknowledgements}. We thank Dr. Zheng Yuan and Dr. Zhitong Mi for checking this paper. The second named author was supported by National Key R\&D Program of China 2021YFA1003100, NSFC-11825101 and NSFC-12425101.


\begin{thebibliography}{99}

\bibitem{AM}
M.F. Atiyah, I.G. MacDonald. Introduction to commutative algebra. 1st ed. CRC Press (1969).
\bibitem{BG1}
S.J. Bao and Q.A. Guan. $L^2$ extension and effectiveness of strong openness property, Acta Mathematica Sinica, English Series, 38, 1949-1964 (2022).

\bibitem{BG2}
S.J. Bao and Q.A. Guan. $L^2$ extension and effectiveness of $L^p$ strong openness property. Acta Mathematica Sinica, English Series, 39, 814-826 (2023).

\bibitem{BG3}
S.J. Bao, Q.A. Guan. Modules at boundary points, fiberwise Bergman kernels, and log-subharmonicity. Peking Math J (2023). \href{https://doi.org/10.1007/s42543-023-00070-8}{https://doi.org/10.1007/s42543-023-00070-8}.

\bibitem{BGY22}
S.J. Bao, Q.A. Guan, and Z. Yuan. A note on $\xi$-Bergman kernels. Front. Math (2024). \href{https://doi.org/10.1007/s11464-023-0021-1}{https://doi.org/10.1007/s11464-023-0021-1}.

\bibitem{BGY23}
S.J. Bao, Q.A. Guan, and Z. Yuan. Concavity property of minimal $L^{2}$ integrals with Lebesgue measurable gain VIII -- partial linearity and log-convexity. Preprint, arXiv: 2307.07112v2 [math.CV].

\bibitem{GR}
H. Grauert and R. Remmert. Coherent analytic sheaves. Grundlehren der mathematischen Wissenchaften, 265, Springer-Verlag, Berlin, 1984.

\bibitem{Guan19}
Q.A. Guan. A sharp effectiveness result of Demailly's strong openness conjecture. Adv. in Math., 348, 51-80 (2019).

\bibitem{GY23}
Q.A. Guan and Z. Yuan. Effectiveness of strong openness property in $L^p$. Proc. Amer. Math. Soc. 151 (2023), no. 10, 4331-4339.

\bibitem{GZ15a}
Q.A. Guan, X.Y. Zhou. A proof of Demailly's strong openness conjecture. Ann. of Math. 182 (2015), 605-616.

\bibitem{GZ15b}
Q.A. Guan, X.Y. Zhou. Effectiveness of Demailly's strong openness conjecture and related problems. Invent. Math. 202 (2015), no.2, 635-676.



\end{thebibliography}
\end{document}